\newcommand{\R}{\mathbb{R}}
\renewcommand{\C}{\mathbb{C}}
\newcommand{\N}{\mathbb{N}}
\newcommand{\supp}{\mbox{supp}\,}
\newcommand{\singsupp}{\mbox{sing\,supp}\,}
\newcommand{\dist}{\mbox{dist}}
\newcommand{\proj}{\mbox{proj}}
\newcommand{\ind}{\mbox{ind}}
\newtheorem{theorem}{Theorem}
\newtheorem{lemma}[theorem]{Lemma}
\newtheorem{corollary}[theorem]{Corollary}
\newtheorem{proposition}[theorem]{Proposition}
\newtheorem{remark}[theorem]{Remark}
\newtheorem{definition}[theorem]{Definition}
\title[Surjectivity of differential operators]{Surjectivity of differential operators and linear topological invariants for spaces of zero solutions}
\author{T. Kalmes}
\thanks{{\it E-mail address}: thomas.kalmes@mathematik.tu-chemnitz.de}
\begin{document}

\begin{abstract}
We provide a sufficient condition for a linear differential operator with constant coefficients $P(D)$ to be surjective on $C^\infty(X)$ and $\mathscr{D}'(X)$, respectively, where $X\subseteq\R^d$ is open. Moreover, for certain differential operators this sufficient condition is also necessary and thus a characterization of surjectivity for such differential operators on $C^\infty(X)$, resp.\ on $\mathscr{D}'(X)$, is derived. Additionally, we obtain for certain surjective differential operators $P(D)$ on $C^\infty(X)$, resp.\ $\mathscr{D}'(X)$, that the spaces of zero solutions $C_P^\infty(X)=\{u\in C^\infty(X);\, P(D)u=0\}$, resp.\ $\mathscr{D}_P'(X)=\{u\in\mathscr{D}'(X);\,P(D)u=0\}$ possess the linear topological invariant $(\Omega)$ introduced by Vogt and Wagner in \cite{Vogt4}, resp.\ its generalization $(P\Omega)$ introduced by Bonet and Doma\'nski in \cite{Bonet}.\\

\noindent Keywords: Surjectivity of differential operator; Linear topological invariants for kernels of differential operators; Differential operators on vector-valued spaces of functions and distributions; Parameter dependence for solutions of linear partial differential equations\\

\noindent 2010 MSC: Primary: 35E10, 46A63. Secondary: 35E20
\end{abstract}

\maketitle

\section{Introduction}

A classical result by Malgrange \cite[Chapitre 1, Th\'eor\`eme 4]{Malgrange} from 1955 states that for a polynomial $P\in\C[X_1,\ldots,X_n]$ and for an open set $X\subseteq\R^n$ the constant coefficient differential operator $P(D):C^\infty(X)\rightarrow C^\infty(X)$ is surjective if and only if $X$ is $P$-convex for supports, that is, if and only if for every compact subset $K$ of $X$ there is another compact subset $L$ of $X$ such that for each compactly supported distribution $u\in\mathscr{E}'(X)$ with $\supp P(-D)u\subseteq K$ it holds $\supp u\subseteq L$.

Although this characterization is more than 60 years old, there are very few differential operators $P(D)$ for which there is a satisfactory geometric evaluation of this condition for open $X\subseteq\R^n$. Convex open sets are $P$-convex for supports, whenever $P\neq 0$, every open set is $P$-convex for supports whenever $P$ is elliptic, and there is a complete geometric characterization of $P$-convexity for supports in the two dimensional case due to H\"ormander \cite[Theorem 10.8.3]{Hoermander}. In arbitrary dimensions however, the problem of characterising open subsets of $\R^n$ which are $P$-convex for supports is far from being solved. For second order operators with principal part equal to the wave operator there is a characterization \cite[Theorem 10.8.6]{Hoermander} which is essentially due to Persson \cite{Persson} who extended this result in \cite{Persson2} to arbitrary operators of real principle type but only in $\R^3$, while Tintarev \cite{Tintarev2}, \cite{Tintarev1} evaluated $P$-convexity for supports for operators of real principal type for bounded open sets $X$ with analytic boundary, and for open sets $X$ whose boundary does not contain any straight line, respectively. Moreover, Nakane \cite{Nakane} gave a geometric characterization when $X$ is $P$-convex for supports for the special case of polynomials $P$ acting along a subspace and being elliptic there. A more convenient geometric characterization for this class of operators can be found in \cite[Theorem 10.8.5]{Hoermander}. Apart from these very special classes of polynomials, resp.\ operators, not much is known.

In 1962, H\"ormander showed \cite{Hoermander4} that $P(D)$ is surjective on $\mathscr{D}'(X)$ if and only if $X$ is $P$-convex for supports as well as $P$-convex for singular supports, where the latter means that for every compact subset $K$ of $X$ there is another compact subset $L$ of $X$ such that for each $u\in\mathscr{E}'(X)$ with $\singsupp P(-D)u\subseteq K$ it follows that $\singsupp u\subseteq L$. Thus, surjectivity of $P(D)$ on $\mathscr{D}'(X)$ implies surjectivity of $P(D)$ on $C^\infty(X)$, but in general the converse implication is far from being true, except in case of $X\subseteq\R^2$ as was recently shown in \cite{Kalmes}. There, the author gives a positive solution of a conjecture by Tr\`eves \cite[p.\ 389, Problem 2]{Treves3} showing that for open $X\subseteq\R^2$ $P$-convexity for supports implies $P$-convexity for singular supports. Though better understood than $P$-convexity for supports, giving geometric characterizations for $P$-convexity for singular supports for concrete differential operators $P(D)$ is not a triviality, see e.g.\ \cite[Chapter 10]{Hoermander}, \cite{Hoermander2}, \cite{Frerick}, \cite{Kalmes}, and \cite{Kalmes1}.

Despite the lack of a satisfactory characterization of $P$-convexity for (singular) supports in many cases of operators and thus a characterization of general solvability of linear partial differential equations $P(D)u=f$ for arbitary smooth functions/distributions $f$ on open subsets $X\subseteq\R^n$ there is a number of results, both classical and very recent, by several authors considering the question whether surjectivity of $P(D)$ on $C^\infty(X)$, resp.\ $\mathscr{D}'(X)$ passes on to surjectivity of $P(D)$ on the space of vector-valued smooth functions $C^\infty(X,E)$, resp.\ vector-valued distributions $\mathscr{D}'(X,E)$, where $E$ is a locally convex space (see, e.g.\ \cite{Grothendieck}, \cite{Vogt}, \cite{Bonet}, \cite{Domanski2}, \cite{Bonet2}). Clearly, this question has a positive answer whenever $P(D)$ admits a continuous linear right inverse which was characterized by Meise, Taylor, and Vogt in \cite{MeiseTaylorVogt} (see also \cite{MeiseTaylorVogt2}) solving a problem posed by Schwartz.

In case $E$ is a space of functions or distributions itself, surjectivity of $P(D)$ on $C^\infty(X,E)$, resp.\ $\mathscr{D}'(X,E)$ is equivalent to an affirmative answer to the problem of parameter dependence for solutions of the differential equation $P(D)u_\lambda=f_\lambda$, i.e.\ to the problem whether for a given family $f_\lambda$ of smooth functions, resp.\ distributions on $X$ depending on a parameter $\lambda$ such that the function/distribution $\lambda\mapsto f_\lambda(x)$ belongs to $E$ it is always possible to find solutions $u_\lambda$ such that $\lambda\mapsto u_\lambda(x)$ also belongs to $E$.

For Fr\'echet spaces $E$ it follows from a result of Grothendieck \cite{Grothendieck} that for a surjective differential operator $P(D)$ on $C^\infty(X)$ the operator is also surjective on $C^\infty(X,E)$. The same implication is no longer true in general when $E$ is the strong dual of a Fr\'echet space $F$, as has been shown by Vogt \cite{Vogt}. However, Vogt also showed that in case of $F=s$, with $s$ denoting the space of rapidly decreasing sequences, surjectivity of  $P(D)$ on $C^\infty(X)$ passes on to surjectivity on $C^\infty(X,s')$ if and only if the kernel of $P(D)$ in $C^\infty(X)$, which we denote by $C^\infty_P(X)$ and which is equipped with the Fr\'echet space structure inherited by $C^\infty(X)$, has the linear topological invariant $(\Omega)$ (see \cite[Proposition 2.2, Theorem 2.4]{Vogt}). Recall that a Fr\'echet space $F$ with a fundamental system of seminorms $\|\cdot\|_1\leq \|\cdot\|_2\leq\ldots$ is said to have $(\Omega)$ if the following holds
\begin{equation}\label{omega}
\forall\,k\,\exists\,l\,\forall\,m\,\exists\,\theta\in (0,1), C>0\,\forall\,u\in F':\,\|u\|_l^*\leq C\|u\|_k^{*\,(1-\theta)}\|u\|_m^{*\,\theta},
\end{equation}
where $\|\cdot\|_k^*$ etc.\ denote the so-called dual norm on $F'$ to $\|\cdot\|_k$, i.e.\ $\|u\|_k^*=\sup\{|u(x)|;\,x\in E,\|x\|_k\leq 1\}$. Obviously, property $(\Omega)$ is a linear topological invariant, in particular it is independent of the special choice of fundamental system of seminorms on $E$.

For hypoelliptic polynomials $P$ property $(\Omega)$ can be viewed as an abstract qualitative version of Hadamard's three circles theorem. In fact, having in mind Grothen\-dieck-K\"othe duality (see e.g.\ \cite{Grothendieck2}, \cite[p. 175]{Vogt}, \cite{Wiechert}) by which for surjective $P(D)$ on $C^\infty(X)$ the strong dual space of $C^\infty_P(X)$ is given in a natural way by the space of germs of smooth functions $u$ on the complement of $X$ which satisfy $P(-D)u=0$ and which behave well at infinity and taking into account that in the case of hypoelliptic $P(D)$ the dual norms appearing in (\ref{omega}) can be chosen as (polynomially) weighted supremum norms over the complement of compact subsets of $X$, condition (\ref{omega}) becomes in fact a very concrete generalisation of Hadamard's result for the Cauchy-Riemann operator to arbitrary hypoelliptic differential operators.

Apart from being characteristic for when surjectivity of $P(D)$ on $C^\infty(X)$ passes on to surjectivity of $P(D)$ on $C^\infty(X,s')$ it has also been shown by Vogt in \cite{Vogt} that property $(\Omega)$ of $C^\infty_P(X)$ is sufficient for $P(D)$ to be surjective on $C^\infty(X,F')$ provided that the Fr\'echet space $F$ has property $(DN)$, i.e.\ the following condition holds 
\[\exists\,k\,\forall\,l\,\exists\,m, C>0\,\forall\,x\in F:\,\|x\|_l^2\leq C\|x\|_k\|x\|_m,\]
where $\|\cdot\|_1\leq \|\cdot\|_2\leq\ldots$ is again a fundamental system of seminorms. Examples of Fr\'echet spaces with $(DN)$ are $s$, $\mathscr{S}(\R^n)$ the space rapidly decreasing functions, $C^\infty_{2\pi}(\R^m)$ the space of smooth functions which are $2\pi$-periodic in each variable, $H(\C)$ the space of entire functions, or more generally Fr\'echet power series spaces of infinite type $\Lambda_\infty(\alpha)$ (see e.g.\ \cite{MeiseVogt}).


In \cite{Vogt} it has been proved that for elliptic operators $P$ the kernel $C^\infty_P(X)$ always has $(\Omega)$ and that the same holds in the more general case of hypo\-elliptic $P$ but under the restriction that the open set $X\subseteq\R^n$ is convex. Therefore it was conjectured for a long time that for a hypoelliptic operator $P(D)$ which is surjective on $C^\infty(X)$ its kernel $C^\infty_P(X)$ always has $(\Omega)$ independent of $X$ being convex or not. However, only recently, this conjecture has been settled in the negative. In \cite{Kalmes2} for any $n\geq 3$ an example of a hypoelliptic operator $P(D)$ and a $P$-convex open set $X\subseteq\R^n$ has been given such that $C^\infty_P(X)$ does not have $(\Omega)$. (That $n\geq 3$ is essential here follows from results obtained in \cite{Kalmes1}.) Stronger variants of the linear topological invariant $(\Omega)$ for $C_P^\infty(X)$ have been considered in \cite{Vogt5}.

It will be shown in section \ref{topological invariants} below that the kernels $C^\infty_P(X)$ of semi-elliptic differential operators $P(D)$ for which the zero set of the principal part is a one dimensional subspace of $\R^n$ always has $(\Omega)$ whenever $X$ is $P$-convex. This class of differential operators contains in particular non-degenerate parabolic operators which is obviously of particular interest in view of applications to concrete problems.

The problem of surjectivity of $P(D)$ on the space of vector-valued distributions $\mathscr{D}'(X,E)$ instead of vector-valued smooth functions was only addressed recently by Bonet and Doma\'nski in \cite{Bonet}. They proved that for a surjective differential operator $P(D)$ on $\mathscr{D}'(X)$ the operator $P(D)$ is surjective on $\mathscr{D}'(X,F')$ for a nuclear Fr\'echet space $F$ with property $(DN)$ if the kernel of $P(D)$ in $\mathscr{D}'(X)$, which we denote by $\mathscr{D}_P'(X)$ and which we equip with the locally convex topology inherited by the strong topology on $\mathscr{D}'(X)$, has property $(P\Omega)$, a generalisation of $(\Omega)$ to the category of PLS-spaces. Recall that a locally convex space $E$ is a PLS-space if $E$ is the projective limit of a sequence of strong duals of Fr\'echet-Schwartz spaces, $E=\proj_{N\in\N}X_N, X_N=\ind_{n\in\N} X_{N,n}$, where $X_{N,n}$ are Banach spaces and the linking maps in the inductive limit $\ind_{n\in\N}X_{N,n}$ are compact. A PLS-space $E$ has property $(P\Omega)$ if
\begin{eqnarray*}
	&\forall\, K\,\exists\,L\geq K\,\forall\, M\geq L\,\exists\,k\,\forall\,l\,\exists\,m, C>0,\theta\in (0,1)\,\forall\,u\in X_K':\\
	&\|u'\circ i_K^L\|_{L,l}^*\leq C\|u'\circ i_K^M\|_{M,m}^{*\,(1-\theta)}\max\{\|u'\|_{K,k}^{*\,\theta},\|u'\circ i_K^M\|_{M,m}^{*\,\theta}\},
\end{eqnarray*}
where $\|\cdot\|_{L,l}^*$ denotes the dual norm to the norm on the Banach space $X_{L,l}$ etc.\ and $i_K^L$ denotes the inclusion from $X_K$ into $X_L$. It is well-known that Fr\'echet-Schwartz spaces are PLS-spaces and examples of PLS-spaces are thus spaces of holomorphic functions $H(V)$, smooth functions $C^\infty(X)$, or more general spaces of distributions $\mathscr{D}'(X)$, real analytic functions $\mathscr{A}(X)$, ultradistributions in the sense of Beurling $\mathscr{D}'_{(\omega)}(X)$, or ultradifferentiable functions in the sense of Roumieu $\mathscr{E}_{\{\omega\}}(X)$. See e.g.\ \cite{Domanski} for more on PLS-spaces, their properties, and their importance in analysis. It is shown in \cite{Bonet} that a Fr\'echet-Schwartz space has $(P\Omega)$ if and only if it has $(\Omega)$. Recall that for hypoelliptic differential operators $P(D)$ the kernels $C_P^\infty(X)$ and $\mathscr{D}'_P(X)$ coincide as locally convex spaces by \cite[Theorem 52.1]{Treves2} so that $\mathscr{D}'_P(X)$ is a Fr\'echet-Schwartz space and thus it has $(P\Omega)$ if and only if it has $(\Omega)$.\\

The present article is organized as follows. In section 2 we give sufficient conditions for $P$-convexity for (singular) supports for $X$ in terms of the minimum principle for the boundary distance of $X$ being valid in certain affine subspaces related to $P$. This enables to give geometric characterizations for $X$ for the surjectivity of $P(D)$ on $C^\infty(X)$ and $\mathscr{D}'(X)$, respectively, for certain types of differential operators.

In section 3 the sufficient condition for $P$-convexity for singular supports is used to give for certain differential operators a positive solution to the problem whether for a surjective operator $P(D)$ on $\mathscr{D}'(X)$ the augmented operator $P^+(D)$ is surjective on $\mathscr{D}'(X\times\R)$, where $P^+(x_1,\ldots,x_{n+1})=P(x_1,\ldots,x_n)$. This problem was posed by Bonet and Doma\'nski in \cite{Bonet}. Although in general this problem has a negative solution as shown in \cite{Kalmes2}, in section 3 we show that this problem has always a positive solution for certain semi-elliptic differential operators including the heat operator, and for operators acting along a subspace of $\R^n$ and being elliptic there.

In section 4 we apply the results from section 3 to the differential operators considered there in order to show that the spaces of zero solutions $C^\infty_P(X)$, resp.\ $\mathscr{D}'_P(X)$ have property $(\Omega)$, resp.\ $(P\Omega)$ whenever $P(D)$ is surjective on $C^\infty(X)$, resp.\ $\mathscr{D}'(X)$ so that for certain locally convex spaces $E$ $P(D)$ is also surjective on $C^\infty(X,E)$, resp.\ on $\mathscr{D}'(X,E)$.

Throughout the paper we use standard notation from the theory of partial differential operators, see e.g.\ \cite{Hoermander}, and functional analysis, see e.g.\ \cite{MeiseVogt}.

\section{Conditions for $P$-convexity}

It is well-known that a necessary condition for an open subset $X\subseteq\R^n$ to be $P$-convex for supports, respectively $P$-convex for singular supports, is that the boundary distance of $X$ satisfies the minimum principle in every characteristic hyperplane, respectively in certain affine subspaces related to $P$, resp.\ (cf.\ \cite[Theorem 10.8.1, Corollary 11.3.2, resp.]{Hoermander}). Recall that a real-valued function $f$ on $X\subseteq\R^n$ is said to satisfy the minimum principle in a closed subset $F$ of $\R^n$ if for every compact set $K\subseteq F\cap X$ we have
\[\min_{x\in K}f(x)=\min_{\partial_F K}f(x),\]
where $\partial_F K$ denotes the boundary of $K$ in $F$. We set the boundary distance $d_X$ of $X$ to be the mapping
\[d_X:X\rightarrow\R,x\mapsto\dist(x,X^c),\]
where the distance is taken with respect to the Euclidean norm $|x|$ in $\R^n$. The next theorem gives sufficient conditions for $P$-convexity for (singular) supports in terms of $d_X$ satisfying the minimum principle in certain affine subspaces related to $P$, and it is the main result of this section. In its formulation we use the following functional defined on the subspaces of $\R^n$. It was introduced by H\"ormander in the context of continuation of differentiability for solutions of partial differential equations, cf.\ \cite[Section 11.3]{Hoermander}.

For a subspace $V\subseteq\R^n$ we set
\[\sigma_P(V)=\inf_{t\geq 1}\liminf_{\xi\rightarrow\infty}\frac{\tilde{P}_V(\xi,t)}{\tilde{P}(\xi,t)},\]
where for $t\geq 1$ and $\xi\in\R^n$
\[\tilde{P}_V(\xi,t):=\sup_{x\in V,|x|\leq t}|P(x+\xi)|,\,\tilde{P}(\xi,t):=\tilde{P}_{\R^n}(\xi,t).\]
Moreover, for $x\in\R^n$ we use the abbreviation $\sigma_P(x):=\sigma_P(\mbox{span}\{x\})$.

\begin{theorem}\label{minimal sufficient}
Let $P$ be a polynomial with principal part $P_m$ and let $X\subseteq\R^n$ be open. Moreover, let $W\neq\{0\}$ be a subspace of $\R^n$ such that $d_X$ satisfies the minimum principle in $x+W$ for every $x\in\R^n$.
\begin{itemize}
	\item[i)] If $\{x\in\R^n;P_m(x)=0\}\subseteq W^\perp$ then $X$ is $P$-convex for supports.
	\item[ii)] If $\{x\in\R^n;\sigma_P(x)=0\}\subseteq W^\perp$ then $X$ is $P$-convex for singular supports.
\end{itemize}
\end{theorem}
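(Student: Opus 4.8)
The plan is to deduce $P$-convexity for (singular) supports from the minimum principle by a two-step scheme: a global reduction which, using only the minimum principle in the translates of $W$, locates a point of the (singular) support of a hypothetical ``bad'' solution at which a local unique continuation property along the $W$-directions must fail; and a local step supplying that unique continuation, which is precisely where the hypothesis on $P_m$, resp.\ on $\sigma_P$, enters. The local step is what I expect to be the main obstacle.

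For part~i), recall that $X$ is $P$-convex for supports if and only if for every compact $K\subseteq X$ there is a compact $L\subseteq X$ with $\supp u\subseteq L$ whenever $u\in\mathscr{E}'(X)$ satisfies $\supp P(-D)u\subseteq K$. Fix $K$ and put $c:=\min_{x\in K}d_X(x)>0$. Since $P\neq0$, the Paley--Wiener--Schwartz theorem (dividing the Fourier--Laplace transform of $u$ by $P$ changes only polynomial, not exponential, factors) yields $\supp u\subseteq\operatorname{ch}(\supp P(-D)u)\subseteq\operatorname{ch}(K)$, so $\supp u$ lies in a fixed bounded set; it therefore suffices to show that $\supp u\subseteq\{x\in X:d_X(x)\geq c\}$, since then $L:=\operatorname{ch}(K)\cap\{x\in X:d_X(x)\geq c\}$, a compact subset of $X$, works. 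Suppose not, and choose $x_0\in\supp u$ with $m_0:=\min_{\supp u}d_X=d_X(x_0)<c$. The set $C:=\supp u\cap(x_0+W)$ is a compact subset of $X$ contained in $x_0+W$ with $\min_C d_X=m_0$ (because $x_0\in C$), so the minimum principle for $d_X$ in $x_0+W$ furnishes a point $y_0$ in the boundary of $C$ relative to $x_0+W$ with $d_X(y_0)=m_0<c$. Thus $y_0\in\supp u$; moreover $P(-D)u$ vanishes in a neighbourhood of $y_0$ (as $y_0\notin K$), and there are points of $x_0+W$ arbitrarily close to $y_0$ at which $u$ vanishes.

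The local step is the assertion that this configuration is impossible: if $\{x:P_m(x)=0\}\subseteq W^\perp$ and $P(-D)u=0$ on a product box $\Omega$ whose intersections with the translates of $W$ are connected, then $\supp u\cap\Omega$ is saturated in the $W$-directions, i.e.\ it meets each such translate either in the full slice of $\Omega$ or not at all. Granting this and taking $\Omega$ to be a small box around $y_0$, the slice of $\supp u$ through $y_0$ must be that entire slice, contradicting the presence of nearby vanishing points of $u$ on $x_0+W=y_0+W$. To prove the assertion one notes that $\{x:P_m(x)=0\}\subseteq W^\perp$ forces $P_m(\eta+\zeta)\neq0$ whenever $\eta\in W\setminus\{0\}$ and $\zeta\in W^\perp$, i.e.\ $P$ is elliptic in the $W$-variables, and then establishes unique continuation along the $W$-translates by a partial Fourier transform in the $W$-variables (equivalently, via the fact that, after localization, solutions of $P(-D)u=0$ are real-analytic along the $W$-translates when $P$ is elliptic there), so that vanishing of $u$ near one point entails vanishing near every nearby point of the same $W$-translate. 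This is the technical heart, and the main obstacle, since it must use the full strength of $\{P_m=0\}\subseteq W^\perp$: ellipticity of $P_m$ on $W$ alone does not suffice, as the example $P(D)=D_1(D_1+D_2)$ (with $W=\operatorname{span}\{e_1\}$) shows.

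Part~ii) runs in parallel, replacing ``support'' by ``singular support'', $P_m$ by $\sigma_P$, and unique continuation by continuation of $C^\infty$-regularity. One first shows, again by a Paley--Wiener-type estimate, that $\singsupp u\subseteq\operatorname{ch}(\singsupp P(-D)u)\subseteq\operatorname{ch}(K)$, and the minimum-principle step then produces $y_0\in\singsupp u$ with $d_X(y_0)<c$, so that $P(-D)u$ is smooth near $y_0$ while $\singsupp u$ has, relative to $y_0+W$, nearby points at which $u$ is $C^\infty$. The local input required is that $\{x:\sigma_P(x)=0\}\subseteq W^\perp$ makes $\singsupp u\cap\Omega$ saturated in the $W$-directions; this is exactly the content of H\"ormander's results on continuation of differentiability (\cite[Section~11.3]{Hoermander}), for whose formulation $\sigma_P$ was introduced, since $\sigma_P(w)>0$ for every $w\in W\setminus\{0\}$ forces $C^\infty$-regularity of $u$ to propagate along each line parallel to $W$ as long as $\singsupp P(-D)u$ stays away, and a connectedness/iteration argument upgrades propagation along the lines of $W$ to saturation in all of $W$. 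As in part~i), the main obstacle is upgrading ``propagation along each $W$-line'' to the clean $W$-saturation statement and thence to the contradiction.
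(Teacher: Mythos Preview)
Your global reduction is sound and actually cleaner than the paper's: instead of first converting the minimum principle into the curve condition of the paper's Lemma~\ref{equivalent} and then chasing a $W$-regular curve out toward $\partial_\infty X$, you apply the minimum principle directly to $C=\supp u\cap(x_0+W)$ and land on a boundary point $y_0$ of $C$ in $x_0+W$ at which $u$ is a homogeneous solution with vanishing nearby on the same $W$-slice. The paper reaches the same local situation, but only after constructing a polygonal approximation of an escaping curve and a tube of radius~$\varepsilon$ around it; your version bypasses that machinery.

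The gap is in your local step for part~i). The mechanism you propose---partial Fourier transform in the $W$-variables, or real-analyticity of solutions along $W$-translates because ``$P$ is elliptic there''---does not go through as stated: $P$ need not split as a polynomial in the $W$- and $W^\perp$-variables, so a partial Fourier transform does not reduce $P(-D)u=0$ to an ODE you can invert, and the partial-analytic-hypoellipticity statement you invoke, while plausible, is itself a nontrivial theorem you would have to prove. The correct (and much lighter) local input is H\"ormander's Theorem~8.6.8: on a convex set $\Omega$ where $P(-D)u=0$, vanishing of $u$ on an open subset $X_1$ propagates to any larger convex $X_2\subseteq\Omega$ such that every characteristic hyperplane meeting $X_2$ already meets $X_1$. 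The hypothesis $\{P_m=0\}\subseteq W^\perp$ says precisely that every characteristic normal lies in $W^\perp$, so a characteristic hyperplane through $y_0$ automatically contains the whole segment from $y_0$ to any nearby point $z\in y_0+W$; choosing $X_1$ a ball around $z$ and $X_2$ the convex hull with a ball around $y_0$ (of no larger radius) verifies the hyperplane condition and yields $y_0\notin\supp u$. This is exactly the propagation the paper runs along its polygonal curve; your saturation statement follows from it, but not from the Fourier-analytic sketch you gave.

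For part~ii) your identification of the local ingredient is correct: the paper replaces Theorem~8.6.8 by the analogous propagation-of-regularity statement (quoted from \cite{Frerick}), and the argument is otherwise identical. Once you fix the local step in~i) by invoking Theorem~8.6.8 rather than partial analyticity, your scheme is a legitimate alternative to the paper's curve-based proof.
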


With $W=\R^n$ part i) of the above theorem gives a new proof of the well-known result that for elliptic $P$ every open set $X\subseteq\R^n$ is $P$-convex for supports. $P$-convexity for supports for polynomials for which the zero set of its principal part is contained in a non-trivial subspace of $\R^n$ was also considered by Zachmanoglou \cite{Zachmanoglou}. In order to prove Theorem \ref{minimal sufficient} we first give a useful characterization of when $d_X$ satisfies the minimum principal in every affine subspace parallel to a given subspace $W$. Therefore, we introduce the following notion.

\begin{definition}
\begin{rm}
Let $W$ be a subspace of $\R^n$ and $X\subseteq\R^n$. For an interval $I\subseteq\R$ a curve $\gamma:I\rightarrow X$ is called $W$-\textit{regular} if $\gamma$ is continuous and piecewise continuously differentiable with $\gamma'(t)\in W$ for all $t\in I$ where $\gamma$ is differentiable.
\end{rm}
\end{definition}

For $X\subseteq\R^n$ we denote by $\partial_\infty X$ the boundary of $X$ in the one-point compactification of $\R^n$, thus $\infty\in\partial_\infty X$ whenever $X$ is an unbounded subset of $\R^n$. For a curve $\gamma:I\rightarrow X$ we set $|\gamma|:=\gamma(I)$

\begin{proposition}\label{geometry}
Let $W$ be a subspace of $\R^n$, $X\subseteq\R^n$ be open, $K\subseteq X$ be compact and non-empty, and $x\in X\backslash K$ such that $|\gamma|\cap K\neq\emptyset$ for every $W$-regular curve $\gamma:[0,\infty)\rightarrow X$ with $\gamma(0)=x$ and $\liminf_{t\rightarrow\infty}\dist(\gamma(t),\partial_\infty X)=0$.

Then, the connected component $Z$ in $(x+W)\cap (X\backslash K)$ which contains $x$ is bounded and $\partial_{x+W}Z$, the boundary of $Z$ in $x+W$, is contained in $K\cap(x+W)$.
\end{proposition}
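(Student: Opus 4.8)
The plan is to argue by contraposition: I would show that if either conclusion of the proposition fails, one can build a single $W$-regular curve that violates the hypothesis. First note that $(x+W)\cap(X\setminus K)=\bigl((x+W)\cap X\bigr)\setminus K$ is open in the affine space $x+W$, which is locally connected, so the component $Z$ of this set containing $x$ is open in $x+W$ (and non-empty, since $x$ lies in it). I would then record the elementary fact that every $z_0\in\partial_{x+W}Z$ lies in $(K\cup\partial X)\cap(x+W)$: indeed $z_0\in\overline Z^{\,x+W}\setminus Z$, and if $z_0$ were a point of $(x+W)\cap X$ not in $K$ it would lie in the open set $O:=\bigl((x+W)\cap X\bigr)\setminus K$; the component of $O$ containing $z_0$ is open and, $z_0$ being a limit of points of $Z$, it would meet $Z$ and hence equal $Z$, forcing $z_0\in Z$ — a contradiction. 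Thus $z_0\in K$ or $z_0\notin X$, and in the latter case $z_0\in\overline Z\setminus X\subseteq\overline X\setminus X=\partial X$. Since $X$ is open in $\R^n$ and hence in $\R^n\cup\{\infty\}$, it equals its interior there, so $\partial_\infty X\supseteq\overline X\setminus X=\partial X$. Consequently both assertions follow once I rule out (a) that $Z$ is unbounded and (b) that $\partial_{x+W}Z$ contains a point $z_0\in\partial X$.

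In either case I would produce a sequence $(w_j)_{j\ge1}$ in $Z$ converging in $\R^n\cup\{\infty\}$ to some $p\in\partial_\infty X$. In case (a), since $Z\subseteq X$ is unbounded, $X$ is unbounded, so $\infty\in\partial_\infty X$; choose $w_j\in Z$ with $|w_j|\ge j$, so that $w_j\to\infty=:p$. In case (b), since $z_0\in\overline Z^{\,x+W}$, choose $w_j\in Z$ with $w_j\to z_0=:p\in\partial X\subseteq\partial_\infty X$. Setting $w_0:=x\in Z$ and using the standard fact that an open connected subset of the affine space $x+W$ is polygonally connected, I would join $w_{j-1}$ to $w_j$ by a polygonal path inside $Z$, parametrized on $[j-1,j]$, and concatenate these paths into a curve $\gamma:[0,\infty)\to Z$. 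Then $\gamma$ is continuous and piecewise linear, hence piecewise continuously differentiable, and wherever differentiable $\gamma'(t)$ is a scalar multiple of a difference of two points of $x+W$, hence lies in $W$; so $\gamma$ is $W$-regular. Moreover $\gamma(0)=x$ and $|\gamma|\subseteq Z\subseteq X\setminus K$, so $|\gamma|\cap K=\emptyset$.

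Finally I would check $\liminf_{t\to\infty}\dist(\gamma(t),\partial_\infty X)=0$: this is immediate because $\gamma(j)=w_j\to p$ in $\R^n\cup\{\infty\}$ and $y\mapsto\dist(y,\partial_\infty X)$ is continuous on that compact metric space, so $\dist(\gamma(j),\partial_\infty X)\to\dist(p,\partial_\infty X)=0$ as $p\in\partial_\infty X$. Thus $\gamma$ is a $W$-regular curve with $\gamma(0)=x$ and $\liminf_{t\to\infty}\dist(\gamma(t),\partial_\infty X)=0$ yet $|\gamma|\cap K=\emptyset$, contradicting the hypothesis; this rules out (a) and (b) and proves the proposition. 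The only mildly delicate points are the bookkeeping with the one-point compactification (that $\partial X\subseteq\partial_\infty X$, and that running off to spatial infinity means converging to $\infty$ there) and the use of polygonal — hence $W$-regular — connecting arcs in open connected subsets of Euclidean space; once these are in place, the passage from a "bad" component $Z$ to a witnessing curve is routine.
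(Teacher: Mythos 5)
Your argument is correct and follows essentially the same strategy as the paper: both show that if $Z$ were unbounded, or if $\partial_{x+W}Z$ contained a point outside $X$, one could concatenate paths inside the open connected set $Z$ into a $W$-regular curve starting at $x$, staying in $X\setminus K$, and approaching $\partial_\infty X$, contradicting the hypothesis. The only cosmetic difference is that you unify the two failure modes by producing a single sequence $w_j\to p\in\partial_\infty X$ and joining it by polygonal arcs, whereas the paper treats unboundedness and boundary escape in two separate (but structurally identical) contradictions; your remark that $\dist(\cdot,\partial_\infty X)$ must be read in a metric on the one-point compactification is a helpful clarification of a point the paper leaves implicit.
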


\begin{proof}
Assume that $Z$ is unbounded. For notational convenience let us define $V:=(x+W)\cap (X\backslash K)$. Because $V$ is locally pathwise connected and $Z$ is open in $V$, thus locally pathwise connected, too, the connectedness of $Z$ implies that $Z$ is pathwise connected.  Hence, there is a continuous, piecewise continuously differentiable curve $\gamma:[0,\infty)\rightarrow Z$ with $\gamma(0)=x$ and $\lim_{t\rightarrow\infty}|\gamma(t)|=\infty$. Because $Z\subseteq X\backslash K$ this contradicts the hypothesis.

Thus, $Z$ is a bounded subset of $V$. Because the inclusion $V\hookrightarrow x+W$ is a homeomorphism onto its image and because $Z$ is open and closed in $V$ so that $\partial_V Z=\emptyset$, we have
\begin{eqnarray*}
\partial_{x+W}Z&=&\Big((\partial_{x+W}Z)\cap V\Big)\cup\Big((\partial_{x+W}Z)\cap(K\cap(x+W))\Big)\\
&&\cup\Big((\partial_{x+W}Z)\cap((\R^n\backslash X)\cap(x+W))\Big)\\
&=&\partial_V Z\cup\Big((\partial_{x+W}Z)\cap(K\cap(x+W))\Big)\\
&&\cup\Big((\partial_{x+W}Z)\cap((\R^n\backslash X)\cap(x+W))\Big)\\
&=&\Big((\partial_{x+W}Z)\cap(K\cap(x+W))\Big)\\
&&\cup\Big((\partial_{x+W}Z)\cap((\R^n\backslash X)\cap(x+W))\Big).
\end{eqnarray*}
So, if we can show that $(\partial_{x+W}Z)\cap((\R^n\backslash X)\cap(x+W))=\emptyset$ the proposition is proved.

Assume that there is
\[y\in(\partial_{x+W}Z)\cap((\R^n\backslash X)\cap(x+W)).\] Because $V\hookrightarrow x+W$ is a homeomorphism onto its image it follows that $Z$ is a pathwise connected subset of $x+W$. Denote by $C$ the closure of $Z$ in $x+W$. Since $Z$ is open in $x+W$ it follows that there is a continuous, piecewise continuously differentiable curve $\gamma:[0,\infty)\rightarrow Z$ with $\gamma(0)=x$ and $\lim_{t\rightarrow\infty}\gamma(t)=y$.

Indeed, there is $y_1$ in $B(y,1)\cap Z$ and a continuous, piecewise continuously differentiable $\alpha_1:[0,1]\rightarrow Z$ with $\alpha_1(0)=x$, $\alpha_1(1)=y_1$. Next, as we can find  $y_2\in B(y,1-|y_1|)\cap Z$ there is a continuous, piecewise continuously differentiable $\alpha_2:[1,2]\rightarrow Z$ with $\alpha_2(1)=y_1$ and $\alpha_2(2)=y_j$. Preceding in this way we obtain a sequence $(y_j)_{j\in\N}$ in $Z$ with $y=\lim_{j\rightarrow\infty}y_j$ and a sequence of continuous, piecewise continuously differentiable curves $\alpha_j:[j-1,j]\rightarrow Z$ with $\alpha_j(j-1)=y_{j-1}$ and $\alpha_j(j)=y_j$. Joining these curves gives the desired $\gamma$.

Clearly, $\gamma$ is a $W$-regular curve with $\gamma(0)=x$ and $\liminf_{t\rightarrow\infty}\dist(\gamma(t),X^c)=0$. From the hypothesis it follows $|\gamma|\cap K\neq\emptyset$ contradicting $Z\subseteq X\backslash K$. 
\end{proof}

We now give a characterization of when $d_X$ satisfies the minimum principle in every affine subspace parallel to a given subspace $W$ in terms of $W$-regular curves.

\begin{lemma}\label{equivalent}
Let $X\subseteq\R^n$ be open and let $\{0\}\neq W\subseteq\R^n$ be a subspace. Then the following are equivalent.
\begin{itemize}
\item[i)] The boundary distance
\[d_X:X\rightarrow\R,x\mapsto\dist(x,X^c)\]
satisfies the minimum principle in $x+W$ for each $x\in X$. \item[ii)] For each compact subset $K\subseteq X$ and every
\[x\in \{y\in X;\,d_X(y)<\dist(K,X^c)\}\]
there is a $W$-regular curve $\gamma:[0,\infty)\rightarrow X$ with $\gamma(0)=x$ and $|\gamma|\cap K=\emptyset$ such that $\liminf_{t\rightarrow\infty}\dist(\gamma(t),\partial_\infty X)=0$.
\end{itemize}
\end{lemma}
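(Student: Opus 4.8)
The plan is to prove the two implications separately, relying throughout on the elementary fact that a $W$-regular curve $\gamma$ with $\gamma(0)=x$ stays inside the single affine subspace $x+W$: on each interval where $\gamma$ is $C^1$ one has $\gamma(t)-\gamma(s)=\int_s^t\gamma'(\tau)\,d\tau\in W$ because $W$ is a closed subspace, and continuity chains these increments together. Hence both i) and ii) are really statements about the open subset $(x+W)\cap X$ of the affine subspace $x+W$, which is connected and, since $W\neq\{0\}$, non-compact; I will use several times that a non-empty compact proper subset of such a space has non-empty boundary in it. One may assume $K\neq\emptyset$.

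For ii)$\Rightarrow$i) I argue by contraposition. If $d_X$ fails the minimum principle in some $x_0+W$, there is a non-empty compact $K_0\subseteq(x_0+W)\cap X$ with $m:=\min_{z\in K_0}d_X(z)<\min_{z\in\partial_{x_0+W}K_0}d_X(z)=:\mu$. Pick $p\in K_0$ with $d_X(p)=m$; since $d_X>m$ on $\partial_{x_0+W}K_0$, the point $p$ lies in the interior of $K_0$ relative to $x_0+W$. Put $K:=\partial_{x_0+W}K_0$, a non-empty compact subset of $X$, and note $\dist(K,X^c)=\min_{z\in K}d_X(z)=\mu>m=d_X(p)$, so $p$ belongs to the set occurring in ii). The curve $\gamma$ furnished by ii) satisfies $|\gamma|\subseteq x_0+W$ by the opening remark; since $\liminf_{t\to\infty}\dist(\gamma(t),\partial_\infty X)=0$ while $K_0$ is compact in $X$ and hence at positive distance from $\partial_\infty X$, the image $|\gamma|$ is not contained in $K_0$; as $\gamma(0)=p$ lies in the relative interior of $K_0$ and $|\gamma|$ is connected, $|\gamma|$ must meet $\partial_{x_0+W}K_0=K$, contradicting $|\gamma|\cap K=\emptyset$.

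For i)$\Rightarrow$ii) I again argue by contraposition, and here Proposition \ref{geometry} does the work. If ii) fails there are a non-empty compact $K\subseteq X$ and $x\in X$ with $d_X(x)<\dist(K,X^c)$ such that every $W$-regular curve $\gamma:[0,\infty)\to X$ with $\gamma(0)=x$ and $\liminf_{t\to\infty}\dist(\gamma(t),\partial_\infty X)=0$ satisfies $|\gamma|\cap K\neq\emptyset$. From $d_X(x)<\dist(K,X^c)=\min_{z\in K}d_X(z)$ we get $x\notin K$, so Proposition \ref{geometry} applies: the connected component $Z$ of $x$ in $(x+W)\cap(X\backslash K)$ is bounded with $\partial_{x+W}Z\subseteq K\cap(x+W)$. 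Its closure $\overline Z$ in $x+W$ is then compact, and since $Z\subseteq X\backslash K$ and $\partial_{x+W}Z\subseteq K$ we have $\overline Z\subseteq X$, so $\overline Z$ is a compact subset of $(x+W)\cap X$ with $\partial_{x+W}\overline Z\subseteq\partial_{x+W}Z\subseteq K\cap(x+W)$. Applying the minimum principle from i) to $\overline Z$ yields
\[\min_{z\in\overline Z}d_X(z)=\min_{z\in\partial_{x+W}\overline Z}d_X(z)\geq\min_{z\in K}d_X(z)=\dist(K,X^c),\]
which is absurd since $x\in\overline Z$ and $d_X(x)<\dist(K,X^c)$.

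The trickiest point is the choice $K=\partial_{x_0+W}K_0$ in ii)$\Rightarrow$i), which converts a failure of the minimum principle into a barrier that every escape curve from $p$ is forced to cross; I do not expect a genuine obstacle beyond this, given Proposition \ref{geometry}. The remaining routine verifications are the topological facts that an escape curve (one with $\liminf_{t\to\infty}\dist(\gamma(t),\partial_\infty X)=0$) leaves every compact subset of $X$, and that a non-empty compact proper subset of the connected non-compact space $x+W$ has non-empty relative boundary, which is what makes $\partial_{x_0+W}K_0$ and $\partial_{x+W}\overline Z$ non-empty so that the minimum principle can be invoked meaningfully.
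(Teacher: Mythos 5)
Your proof is correct and follows essentially the same route as the paper: both directions are the ones the author uses, with ii)$\Rightarrow$i) proved by taking $K=\partial_{x_0+W}K_0$ as the barrier that the escape curve from a violating interior point must cross, and i)$\Rightarrow$ii) proved by invoking Proposition~\ref{geometry} to get a bounded component $Z$, passing to its compact closure in $x_0+W$, and applying the minimum principle there to derive a contradiction with $d_X(x)<\dist(K,X^c)$. The only cosmetic difference is that you single out a minimizer $p\in K_0$ whereas the paper works with an arbitrary point violating the inequality; the substance is identical.
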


\begin{proof}
We first show that ii) implies i). So let $K\subseteq (x+W)\cap X$ be compact for some $x\in X$. Assume that there is $x_0\in K$ with $d_X(x_0)<\dist(\partial_{x+W}K,X^c)$. Applying ii) to the compact subset $\partial_{x+W}K$ of $X$ there is a $W$-regular curve $\gamma:[0,\infty)\rightarrow X$ with $\gamma(0)=x_0$, $\liminf_{t\rightarrow\infty}\dist(\gamma(t),\partial_\infty X)=0$ and $|\gamma|\cap\partial_{x+W}K=\emptyset$. Because $\gamma$ is $W$-regular and because $x_0\in K\subseteq x+W$ it follows that $|\gamma|\subseteq (x+W)\cap X$. Since $\gamma(0)=x_0$ is contained in $\mbox{int}_{x+W}(K)$, the interior of $K$ with respect to $x+W$, and since $\liminf_{t\rightarrow\infty}\dist(\gamma(t),\partial_\infty X)=0$ it follows from the compactness of $K\subseteq X$ that $|\gamma|\cap\partial_{x+W}K\neq\emptyset$ which gives a contradiction.

In order to show that i) implies ii), assume that for some compact $K\subseteq X$ there is $x_0\in \{y\in X;\,d_X(y)<\dist(K,X^c)\}$ such that every $W$-regular curve $\gamma:[0,\infty)\rightarrow X$ with $\gamma(0)=x_0$ and $\liminf_{t\rightarrow\infty}\dist(\gamma(t),\partial_\infty X)=0$ intersects $K$.

Let $Z$ be the connected component of $(x_0+W)\cap(X\backslash K)$ containing $x_0$. From the assumption on $x_0$ and proposition \ref{geometry} it follows that $Z$ is a bounded subset of $x_0+W$ and
\begin{equation}\label{boundary}
	\partial_{x_0+W}Z\subseteq K.
\end{equation}
Let us denote the closure of $Z$ in $x_0+W$ by $C$. Then $C$ is a compact subset of $(x_0+W)\cap X$ by proposition \ref{geometry}. By i) we obtain
\[\min_{x\in C}d_X(x)=\min_{x\in\partial_{x_0+W}C}d_X(x).\]
Because of $x_0\in C$ this yields
\[d_X(x_0)\geq\min_{x\in\partial_{x_0+W}C}d_X(x) =\dist(\partial_{x_0+W}Z,X^c)\geq\dist(K,X^c) \]
where we used (\ref{boundary}) in the last inequality. But this contradicts $d_X(x_0)<\dist(K,X^c)$.
\end{proof}

\textit{Proof of Theorem \ref{minimal sufficient}}.
In the sequel we denote the convex hull of $x,y\in\R^n$ by $[x,y]$. In order to prove i) we fix $u\in\mathscr{E}'(X)$ and set $K:=\supp \check{P}(D)u$, where as usual $\check{P}(\xi)=P(-\xi)$. Moreover, we fix $x\in\{y\in X;\,\dist(y,X^c)<\dist(K,X^c)\}$. Because $d_X$ satisfies the minimum principle in $y+W$ for every $y\in X$ it follows from lemma \ref{equivalent} that there is a $W$-regular curve $\gamma:[0,\infty)\rightarrow X$ such that $\gamma(0)=x$, $|\gamma|\cap K=\emptyset$, and $\liminf_{t\rightarrow\infty}\dist(\gamma(t),X^c)=0$.

Because $\supp u$ is a compact subset of $X$ it follows from the properties of $\gamma$ that there is $T>0$ with $\gamma(T)\notin\supp u$. Moreover, we can find $\varepsilon>0$ such that the open ball $B(\gamma(T),\varepsilon)$ of radius $\varepsilon$ about $\gamma(T)$ does not intersect $\supp u$, $\gamma([0,T])+B(0,\varepsilon)\subseteq X$ and $K\cap(\gamma([0,T])+B(0,\varepsilon))=\emptyset$, where $\gamma([0,T])+B(0,\varepsilon)=\{y+z;y\in\gamma([0,T]),z\in B(0,\varepsilon)\}$.

Next, we choose $0=t_0<t_1<\ldots<t_k=T$ such that for each $j=1,\ldots,k$ the restriction of $\gamma$ to $[t_{j-1},t_j]$ is continuously differentiable and
\[|\int_{t_{j-1}}^{t_j}\gamma'(t)dt|<\frac{\varepsilon}{2}.\]
We define
\[f:[0,k]\rightarrow\R^n,s\mapsto\gamma(t_{\lfloor s\rfloor})+(s-\lfloor s\rfloor)\int_{t_{\lfloor s\rfloor}}^{t_{\lfloor s\rfloor}+1}\gamma'(t)dt,\]
where $\lfloor s\rfloor$ denotes the integer part of $s$. Then $f$ is a polygonal curve in $x+W$ by the $W$-regularity of $\gamma$ with $f([j-1,j])=[\gamma(t_{j-1}),\gamma(t_j)], j=1,\ldots,k$. Moreover, due to the choice of $\varepsilon$, we have $|f|+B(0,\frac{\varepsilon}{2})\subseteq X\backslash K$.

For $N\in\{y\in\R^n;\,P_m(y)=0\}\backslash\{0\}$ and $\alpha\in\R$ let
\[H_{N,\alpha}=\{x\in\R^n;\,\langle x,N\rangle=\alpha\}\]
be the corresponding characteristic hyperplane. Since $[\gamma(t_{k-1}),\gamma(T)]\subseteq x+W$ and $N\in W^\perp$ it follows that $H_{N,\alpha}$ intersects $B(\gamma(T),\varepsilon)$ whenever $H_{N,\alpha}$ intersects $[\gamma(t_{k-1}),\gamma(T)]+B(0,\varepsilon)$. By the choice of $\varepsilon$ we have $u_{|B(\gamma(T),\varepsilon)}=0$ so that by \cite[Theorem 8.6.8, vol.\ I]{Hoermander} $u$ vanishes in $[\gamma(t_{k-1}),\gamma(T)]+B(0,\varepsilon)$.

Iteration of the above argument yields that $u$ vanishes in $|f|+B(0,\varepsilon)$, in particular $x=\gamma(0)=f(0)$ does not belong to $\supp u$. As $x$ was an arbitrary point with $\dist(x,X^c)<\dist(K,X^c)$ it follows from the definition of $K$ that
\[\dist(\supp u,X^c)\geq \dist(K,X^c)=\dist(\supp \check{P}(D),X^c).\]
Since trivially $\dist(\supp u,X^c)\leq\dist(\supp\check{P}(D)u,X^c)$ it follows from \cite[Theorem 10.6.3, vol.\ II]{Hoermander} that $X$ is $P$-convex for supports.

In order to prove ii), we replace in the above arguments $\supp$ by $\singsupp$, $N\in\{y\in\R^n;\,P_m(y)=0\}$ by $N\in\{y\in\R^n;\,(\sigma_{\check{P}}(y)=)\sigma_P(y)=0\}$, and the reference to \cite[Theorem 10.6.3, vol.\ II]{Hoermander} by \cite[Corollary 2]{Frerick}.\hfill$\square$\\

As a corollary to Theorem \ref{minimal sufficient} we obtain the next result characterizing $P$-convexity for supports for particular polynomials $P$. This characterization is in particular applicable to the the Schr\"odinger operator $P(D)=\Delta_x+i\frac{\partial}{\partial t}$ and the heat operator $P(D)=\Delta_x-\frac{\partial}{\partial t}$, or more generally, to non-degenerate parabolic operators.

\begin{corollary}\label{characterization for supports}
Let $X\subseteq\R^n$ be open and let $P$ be a polynomial with principal part $P_m$ such that $\{x\in\R^n;\,P_m(x)=0\}$ is a one-dimensional subspace of $\R^n$. Then, the following are equivalent.
\begin{itemize}
	\item[i)] $P(D)$ is surjective on $C^\infty(X)$.
	\item[ii)] $d_X$ satisfies the minimum principle in every characteristic hyperplane.
\end{itemize} 
\end{corollary}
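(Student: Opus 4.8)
The plan is to derive both implications directly from Theorem~\ref{minimal sufficient} together with the classical necessary condition for $P$-convexity for supports; the point is that under the stated hypothesis on $P_m$ the characteristic directions fill out exactly a line. Set $V:=\{x\in\R^n;\,P_m(x)=0\}$, which by assumption is a one-dimensional subspace of $\R^n$, say $V=\mathrm{span}\{v\}$ with $v\neq 0$, and put $W:=V^\perp$. First I would record two elementary facts. (a) Necessarily $n\geq 2$: a one-dimensional subspace requires $n\geq 1$, and if $n=1$ the zero set of the nonzero homogeneous polynomial $P_m$ is $\{0\}$, contradicting $\dim V=1$; consequently $\dim W=n-1\geq 1$, so $W\neq\{0\}$, and $W^\perp=V=\{x\in\R^n;\,P_m(x)=0\}$. (b) The characteristic hyperplanes of $P$ are exactly the affine hyperplanes parallel to $W$: a hyperplane $H_{N,\alpha}=\{x;\,\langle x,N\rangle=\alpha\}$ is characteristic iff $P_m(N)=0$, i.e.\ iff $N\in V\setminus\{0\}$, i.e.\ iff $N$ is a nonzero multiple of $v$; since $H_{\lambda v,\alpha}=H_{v,\alpha/\lambda}$ for $\lambda\neq 0$ and each $H_{v,\beta}$ is a translate of $v^\perp=W$, the characteristic hyperplanes form precisely the family $\{y+W;\,y\in\R^n\}$.

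With (a) and (b) in hand, the implication ii)$\Rightarrow$i) is immediate: by (b), hypothesis ii) says exactly that $d_X$ satisfies the minimum principle in $x+W$ for every $x\in\R^n$, and by (a) we have $\{x\in\R^n;\,P_m(x)=0\}=W^\perp$; hence Theorem~\ref{minimal sufficient}~i) applies and yields that $X$ is $P$-convex for supports. For the reverse implication i)$\Rightarrow$ii) there is nothing new to prove: it is the well-known necessary condition already quoted in the introduction, namely that if $X$ is $P$-convex for supports then $d_X$ satisfies the minimum principle in every characteristic hyperplane, cf.\ \cite[Theorem~10.8.1, vol.~II]{Hoermander}.

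I do not anticipate a genuine obstacle; the corollary is in effect a dictionary between the two formulations, and the only steps deserving a sentence of justification are the dimension count in (a) — which is what makes Theorem~\ref{minimal sufficient} applicable, since it requires $W\neq\{0\}$ — and the identification (b), which uses the one-dimensionality of $V$ in an essential way: if $\dim V>1$, the translates of $V^\perp$ would be affine subspaces of codimension larger than one rather than hyperplanes, so Theorem~\ref{minimal sufficient}~i) would still give a sufficient condition for $P$-convexity for supports, but the equivalence with the minimum principle in characteristic hyperplanes would in general fail.
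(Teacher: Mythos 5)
Your proposal is correct and follows exactly the paper's argument: set $W=\{P_m=0\}^\perp$, apply Theorem~\ref{minimal sufficient}~i) for ii)$\Rightarrow$i), and cite H\"ormander's Theorem~10.8.1 for i)$\Rightarrow$ii). The only difference is that you spell out the dictionary (one-dimensionality of $\{P_m=0\}$, hence $W\neq\{0\}$, and the identification of characteristic hyperplanes with translates of $W$), which the paper leaves implicit.
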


\begin{proof}
We set $W:=\{x\in\R^n;\,P_m(x)=0\}^\perp$. That ii) implies i) follows from Theorem \ref{minimal sufficient} while the reverse implication is \cite[Theorem 10.8.1, vol.\ II]{Hoermander}.
\end{proof}

The analogous result for $P$-convexity for singular supports is the following.

\begin{corollary}\label{flat singular}
Let $P\neq 0$ be a polynomial such that $\{x\in\R^n;\,\sigma_P(x)=0\}=W^\perp$ for some subspace $W\subseteq\R^n$ with $\sigma_P(W^\perp)=0$. Then, for $X\subseteq\R^n$ the following are equivalent.
\begin{itemize}
	\item[i)] $X$ is $P$-convex for singular supports.
	\item[ii)] $d_X$ satisfies the minimum principle in $x+W$ for every $x\in\R^n$.
\end{itemize}
\end{corollary}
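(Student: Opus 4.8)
The plan is to deduce Corollary~\ref{flat singular} directly from Theorem~\ref{minimal sufficient}~ii) together with the known necessity of the minimum principle for $P$-convexity for singular supports. For the implication ii) $\Rightarrow$ i): assuming $d_X$ satisfies the minimum principle in $x+W$ for every $x\in\R^n$, and using the hypothesis $\{x\in\R^n;\,\sigma_P(x)=0\}=W^\perp$, we have in particular $\{x\in\R^n;\,\sigma_P(x)=0\}\subseteq W^\perp$, so Theorem~\ref{minimal sufficient}~ii) applies and yields that $X$ is $P$-convex for singular supports. (Here we must briefly note that $W\neq\{0\}$ in order to invoke the theorem: if $W=\{0\}$ then $W^\perp=\R^n$, so $\sigma_P(x)=0$ for all $x$, which forces $\sigma_P(\R^n)=0$; but for $P\neq 0$ one always has $\sigma_P(x)>0$ for $x$ with $P_m(x)\neq 0$, a contradiction. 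Alternatively the degenerate case can be handled separately, as $W=\{0\}$ makes the minimum principle condition in ii) vacuous on $0$-dimensional affine subspaces.)

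For the reverse implication i) $\Rightarrow$ ii): suppose $X$ is $P$-convex for singular supports. By \cite[Corollary 11.3.2, vol.\ II]{Hoermander}, this forces $d_X$ to satisfy the minimum principle in every affine subspace of the form $x+V$ where $V$ is a subspace with $\sigma_P(V)=0$. The hypothesis supplies us with the subspace $W$ satisfying $\sigma_P(W^\perp)=0$ — wait, we need $\sigma_P$ to vanish on $W$ itself, not on $W^\perp$. So the key observation is that the hypothesis $\{x\in\R^n;\,\sigma_P(x)=0\}=W^\perp$ must be combined with monotonicity/homogeneity properties of $\sigma_P$: since every line through a point of $W^\perp$ has $\sigma_P=0$ on it, and $\sigma_P$ is monotone under inclusion of subspaces in the appropriate direction, one concludes $\sigma_P(W^\perp)=0$; but the stated hypothesis directly gives $\sigma_P(W^\perp)=0$, so what we actually need for the converse is that the minimum principle holds in translates of $W$, and \cite[Corollary 11.3.2]{Hoermander} gives it in translates of $W^\perp$. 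Thus the correct reading is that the roles of $W$ and $W^\perp$ are arranged so that $\sigma_P$ vanishing exactly on $W^\perp$ means $W^\perp$ is the relevant subspace for the necessity direction; the statement as written must intend that the minimum principle in $x+W$ is what Theorem~\ref{minimal sufficient} needs, so the converse should instead be read off from applying \cite[Corollary 11.3.2]{Hoermander} with the subspace whose $\sigma_P$ vanishes — and the hypothesis $\sigma_P(W^\perp)=0$ is exactly the compatibility condition ensuring the two directions refer to the same geometric condition after the duality $V\mapsto V^\perp$ is accounted for.

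Concretely, the converse runs: $P$-convexity for singular supports implies, via \cite[Corollary 11.3.2, vol.\ II]{Hoermander} applied with any subspace $V$ on which $\sigma_P$ vanishes, that $d_X$ has the minimum principle in all translates of such $V$; taking $V=W^\perp$ (legitimate since $\sigma_P(W^\perp)=0$ by hypothesis) and then passing through the elementary linear-algebra fact that the minimum principle in all translates of $W^\perp$ together with the structure of the zero set $\{x;\sigma_P(x)=0\}=W^\perp$ is equivalent to the minimum principle in all translates of $W$ — this last equivalence being the place where one uses that a $W$-regular curve can be built out of segments, exactly as in Lemma~\ref{equivalent} — delivers ii). I expect the main obstacle to be precisely this bookkeeping between $W$ and $W^\perp$: one has to check carefully which subspace the minimum-principle hypothesis of Theorem~\ref{minimal sufficient} refers to versus which subspace appears in H\"ormander's necessity criterion, and verify that the extra hypothesis $\sigma_P(W^\perp)=0$ is exactly what reconciles them, rather than being an independent assumption. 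Everything else is a direct citation of Theorem~\ref{minimal sufficient}~ii), \cite[Corollary 11.3.2]{Hoermander}, and the equivalence in Lemma~\ref{equivalent}.
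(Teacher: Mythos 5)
Your forward direction (ii) $\Rightarrow$ i)) is fine: $\{x;\sigma_P(x)=0\}=W^\perp$ trivially gives $\{x;\sigma_P(x)=0\}\subseteq W^\perp$, so Theorem~\ref{minimal sufficient}~ii) applies, and your observation that the hypotheses force $W\neq\{0\}$ (since $\sigma_P(\R^n)=1\neq 0$ for $P\neq 0$, contradicting $\sigma_P(W^\perp)=0$ if $W^\perp=\R^n$) is a correct and worthwhile check that the paper leaves implicit.

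The gap is in the converse, and it stems from a misreading of \cite[Corollary~11.3.2, vol.~II]{Hoermander}. That result does not say that $P$-convexity for singular supports forces the minimum principle for $d_X$ in translates of a subspace $V$ with $\sigma_P(V)=0$; it forces the minimum principle in translates of $V^\perp$. You can see this orientation confirmed in the paper's proof of Theorem~\ref{semi-elliptic}, where $\sigma_{P^+}(Z\times\R)=0$ is used to conclude the minimum principle in $x+(Z\times\R)^\perp$. With the correct orientation, the converse of the corollary is a one-line citation: apply \cite[Corollary~11.3.2]{Hoermander} with $V=W^\perp$ (legitimate because $\sigma_P(W^\perp)=0$ by hypothesis) to get the minimum principle in translates of $(W^\perp)^\perp=W$, which is exactly statement~ii). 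Because you read the corollary the other way round, you were forced to postulate an ``elementary linear-algebra fact'' that the minimum principle in all translates of $W^\perp$, combined with $\{x;\sigma_P(x)=0\}=W^\perp$, is equivalent to the minimum principle in all translates of $W$ (invoking Lemma~\ref{equivalent} for the conversion). No such equivalence holds: the minimum principle in translates of a subspace and in translates of its orthogonal complement are independent conditions in general, and Lemma~\ref{equivalent} merely reformulates a fixed minimum-principle condition in terms of $W$-regular curves; it does not allow you to pass from $W^\perp$ to $W$. Once the citation is read correctly, none of this patching is necessary, and the proof is exactly the paper's: ii)~$\Rightarrow$~i) from Theorem~\ref{minimal sufficient}~ii), and i)~$\Rightarrow$~ii) directly from \cite[Corollary~11.3.2, vol.~II]{Hoermander} with $V=W^\perp$.
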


\begin{proof}
That ii) implies i) follows again from Theorem \ref{minimal sufficient} while the reverse implication is \cite[Corollary 11.3.2, vol.\ II]{Hoermander}.
\end{proof}

\begin{remark}
\begin{rm}
We do not know if the condition "$\sigma_P(W^\perp)=0$" in Corollary \ref{flat singular} is redundant.
\end{rm}
\end{remark}

Recall that a polynomial $P\in\C[X_1,\ldots,X_d]$ is said to act along a subspace $W$ of $\R^d$ if $P(x)=P(\pi_W(x))$ for every $x\in\R^d$, where $\pi_W$ denotes the orthogonal projection of $\R^d$ onto $W$. It is easily seen that $P$ acts along a subspace $W$ of $\R^d$ if and only if $P(x+y)=P(x)$ holds for every $x\in\R^d$ and each $y\in W^\perp$. Moreover, a polynomial $P$ acting along a subspace $W$ is said to be elliptic on $W$ if $P_m(x)\neq 0$ for all $x\in W\backslash\{0\}$.

\begin{lemma}\label{elliptic zeros}
Let $P\in\C[X_1,\ldots,X_n]$ be a non-constant polynomial which acts along a subspace $W\subseteq\R^n$ and is elliptic as a polynomial on $W$. Then $\sigma_P(V)=0$ holds for a subspace $V\subseteq\R^n$ if and only if $V\subseteq W^\perp$.
\end{lemma}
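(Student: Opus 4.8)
The plan is to exploit the explicit structure of $P$. ``Acting along $W$'' means $P(\xi)=Q(\pi_W\xi)$ for a (unique) polynomial $Q$ on $W$, where $\pi_W$ denotes the orthogonal projection onto $W$, and ``elliptic on $W$'' means $m:=\deg Q=\deg P\geq 1$ with $Q_m(y)\neq 0$ for every $y\in W\setminus\{0\}$, $Q_m$ being the principal part of $Q$. We may assume that $W$ is a proper non-zero subspace (for $W=\R^n$ the operator is elliptic and $\sigma_P\equiv 1$, for $W=\{0\}$ it is constant). Two elementary facts will be used. Since the orthogonal projection maps $\{x\in\R^n:|x|\leq t\}$ onto $\{y\in W:|y|\leq t\}$, we have
\[\tilde P(\xi,t)=\sup_{y\in W,\,|y|\leq t}|Q(y+\pi_W\xi)|,\qquad \tilde P_V(\xi,t)=\sup_{x\in V,\,|x|\leq t}|Q(\pi_W x+\pi_W\xi)|,\]
so both depend on $\xi$ only through $\pi_W\xi$, and $\tilde P_V$ depends on $V$ only through $U:=\pi_W(V)\subseteq W$. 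Moreover, ellipticity of $Q$ yields constants $0<c\leq C$ (depending on $P$) with $c(|\eta|+t)^m\leq\sup_{y\in W,\,|y|\leq t}|Q(y+\eta)|\leq C(|\eta|+t)^m$ for all $\eta\in W$, $t\geq 1$ (the lower bound comes from $|Q(\lambda y)|\geq\mathrm{const}\cdot\lambda^m|y|^m$ for $\lambda\geq 1$ and $|y|$ large, together with a compactness argument for bounded $\eta$; the upper bound is immediate from $\deg Q=m$). In particular $c(|\pi_W\xi|+t)^m\leq\tilde P(\xi,t)\leq C(|\pi_W\xi|+t)^m$.

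I would first settle the implication ``$V\subseteq W^\perp\ \Rightarrow\ \sigma_P(V)=0$''. Here $\pi_W x=0$ for $x\in V$, so $\tilde P_V(\xi,t)=|Q(\pi_W\xi)|=|P(\xi)|$, independently of $t$. Choosing a unit vector $\nu\in W^\perp$ (possible since $W\neq\R^n$) and letting $\xi=s\nu$ with $s\to\infty$ produces a sequence going to infinity with $\pi_W\xi=0$, along which $\tilde P_V(\xi,t)=|Q(0)|$ while $\tilde P(\xi,t)=\sup_{y\in W,\,|y|\leq t}|Q(y)|\geq ct^m$. Hence $\liminf_{\xi\to\infty}\tilde P_V(\xi,t)/\tilde P(\xi,t)\leq|Q(0)|/(ct^m)$ for every $t\geq 1$, and taking the infimum over $t\geq 1$ gives $\sigma_P(V)=0$.

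The substantial direction is ``$V\not\subseteq W^\perp\ \Rightarrow\ \sigma_P(V)>0$''. Then $U=\pi_W(V)\neq\{0\}$; fix a unit vector $e\in U$. Since $\pi_W|_V\colon V\to U$ is onto, it has a (bounded) linear right inverse, so there is $\delta_V>0$ with $\{u\in U:|u|\leq\delta_V t\}\subseteq\pi_W(\{x\in V:|x|\leq t\})$ for all $t>0$, and therefore $\tilde P_V(\xi,t)\geq\sup_{|s|\leq\delta_V t}|Q(se+\pi_W\xi)|$. Everything now reduces to the following estimate, which I expect to be the main obstacle: \emph{for every $R_1>0$ there is $c'>0$ (depending on $Q$, $e$, $R_1$) with $\sup_{|s|\leq R}|Q(se+\eta)|\geq c'(|\eta|+R)^m$ for all $\eta\in W$ and all $R\geq R_1$.} Granting this with $R=\delta_V t\geq\delta_V=:R_1$ and $\eta=\pi_W\xi$, and using the two-sided bound on $\tilde P$, one obtains $\tilde P_V(\xi,t)/\tilde P(\xi,t)\geq\kappa_V$ for some constant $\kappa_V>0$ depending only on $V$ and $P$, uniformly in $\xi\in\R^n$ and $t\geq 1$; in particular $\sigma_P(V)\geq\kappa_V>0$. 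Together with the previous paragraph this proves the lemma.

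Finally I sketch the estimate; the delicate point is uniformity in $\eta$, in particular for $\eta$ near a zero of $Q$, and in $R\geq R_1$. If $|\eta|\leq R$, rescale $s=R\rho$: then $Q(R\rho e+\eta)=R^m p_{\eta,R}(\rho)$, where $p_{\eta,R}$ has degree $\leq m$, leading coefficient $Q_m(e)\neq 0$, and all lower coefficients bounded in terms of $(1+|\eta|)/R\leq 1+1/R_1$ only; the set of such polynomials is compact and contains no polynomial that vanishes identically, so $p\mapsto\sup_{|\rho|\leq 1}|p(\rho)|$ is bounded below by some $c_0>0$ on it, giving $\sup_{|s|\leq R}|Q(se+\eta)|\geq c_0R^m\geq 2^{-m}c_0(|\eta|+R)^m$. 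If $|\eta|>R$ with $|\eta|$ large, take $s=0$ and use $|Q(\eta)|\geq\mathrm{const}\cdot|\eta|^m$ from ellipticity together with $|\eta|+R<2|\eta|$. The remaining range (with $R_1\leq R\leq|\eta|$ and $|\eta|$ bounded) is a compact set of pairs $(\eta,R)$ on which $\sup_{|s|\leq R}|Q(se+\eta)|/(|\eta|+R)^m$ is continuous and strictly positive, since for $R>0$ the polynomial $s\mapsto Q(se+\eta)$ has nonzero leading coefficient $Q_m(e)$ and hence does not vanish identically on $[-R,R]$; thus it is bounded below there as well, and $c'$ is the minimum of the three constants.
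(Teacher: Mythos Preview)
Your proof is correct. The overall strategy matches the paper's: both reduce the nontrivial direction to a uniform lower bound on $\tilde P_V(\xi,t)/\tilde P(\xi,t)$ by fixing a single direction $e\in\pi_W(V)$ and exploiting ellipticity of $Q$ on $W$, and both dismiss the direction $V\subseteq W^\perp\Rightarrow\sigma_P(V)=0$ as easy (you spell it out, the paper just says ``clearly'').

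The genuine difference is in how the key lower bound is obtained. The paper, after normalizing to $W=\R^k\times\{0\}$ and picking $x\in V$ with $|x'|=1$, runs a short explicit chain of inequalities: it uses $|P(\zeta')|\gtrsim|\zeta'|^m$, then the elementary convexity estimates
\[
|\xi'+tx'|^{2m}+|\xi'-tx'|^{2m}\;\geq\;\sum_{j=1}^k\bigl((\xi_j+tx_j)^{2m}+(\xi_j-tx_j)^{2m}\bigr)\;\geq\;2\sum_{j=1}^k(\xi_j^{2m}+t^{2m}x_j^{2m}),
\]
together with the matching upper bound $\tilde P(\xi,t)^2\lesssim t^{2m}+\sum_j\xi_j^{2m}$, to get a positive constant lower bound on the ratio in one stroke. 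Your route instead isolates the lemma $\sup_{|s|\leq R}|Q(se+\eta)|\geq c'(|\eta|+R)^m$ and proves it by a three-case split (rescaling plus compactness of a family of monic-up-to-$Q_m(e)$ polynomials; ellipticity at infinity; continuity on a compact remainder). This is sound and arguably more conceptual---it makes transparent that only $Q_m(e)\neq 0$ is used---but it is noticeably longer than the paper's half-page computation. If you want to streamline, the paper's $2m$-th-power trick gets you there without any compactness argument.
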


\begin{proof}
Without loss of generality we may assume $W=\R^k\times\{0\}$ with $1\leq k\leq n$ and $0\in\R^{n-k}$. For $x=(x_1,\ldots,x_n)$ we denote $x'=(x_1,\ldots,x_k)$.

Clearly, $V\subseteq W^\perp$ implies $\sigma_P(V)=0$. On the other hand, if $V$ is not a subspace of $W^\perp$ there is $x\in V$ with $|x'|=1$. From the ellipticity of $P$ on $\R^k$ it follows that for some $C>0$ we have
\[\forall\,\xi\in\R^n:\,C |\xi'|^m\leq |P(\xi')|,\]
where $m\in\N_0$ is the degree of $P$. Thus, for $\xi\in\R^n$ and $t>0$ we have
\begin{eqnarray*}
	\tilde{P}_{\mbox{span}\{x\}}(\xi,t)^2&=&\sup_{|\theta|\leq t}|P(\xi'+\theta x')|^2\\
	&\geq& C\sup_{|\theta|\leq t}|\xi'+\theta x'|^{2m}\\
	&\geq&\frac{C}{2}\Big(|\xi'+tx'|^{2m}+|\xi'-tx'|^{2m}\Big)\\
	&=&\frac{C}{2}\Big(\big(\sum_{j=1}^k (\xi_j+tx_j)^2\big)^m+\big(\sum_{j=1}^k(\xi_j-tx_j)^2\big)^m\Big)\\
	&\geq&\frac{C}{2}\Big(\sum_{j=1}^k\big((\xi_j+tx_j)^{2m}+(\xi_j-tx_j)^{2m}\big)\Big)\\
	&\geq& C\Big(\sum_{j=1}^k\xi_j^{2m}+t^{2m}\sum_{j=1}^k x_j^{2m}\Big).
\end{eqnarray*}
Moreover, for suitable $C',D>0$ we have for all $\xi\in\R^n$ and $t>0$
\[\tilde{P}(\xi,t)^2\leq C'(t^{2m}+|\xi'|^{2m})\leq D(t^{2m}+\sum_{j=1}^k\xi_j^{2m}).\]
If we set $\alpha:=\sum_{j=1}^k x_j^{2m}$ and take into account that $\alpha>0$ because of $|x'|=1$ it follows that for every $\xi\in\R^n$ and $t>0$ we have
\[\frac{\tilde{P}_{\mbox{span}\{x\}}(\xi,t)^2}{\tilde{P}(\xi,t)^2}\geq\frac{C\Big(\sum_{j=1}^k\xi_j^{2m}+t^{2m}\alpha\Big)}{D(\sum_{j=1}^k\xi_j^{2m}+t^{2m})}\geq\frac{C}{D}\alpha>0,\]
so that
\[\sigma_P(V)\geq\sigma_P(x)\geq\frac{C}{D}\alpha>0\]
which proves the lemma.
\end{proof}

The next theorem complements a result of Nakane \cite{Nakane} mentioned in the introduction.

\begin{theorem}\label{elliptic}
Let $P\in\C[X_1,\ldots,X_n]$ be a polynomial which acts along a subspace $W\subseteq\R^n$ and is elliptic as a polynomial on $W$. Then, for an open subset $X\subseteq\R^n$ the following are equivalent.
\begin{itemize}
	\item[i)] $X$ is $P$-convex for supports.
	\item[ii)] $X$ is $P$-convex for singular supports.
	\item[iii)] $d_X$ satisfies the minimum principle in $x+W$ for every $x\in\R^n$.
\end{itemize}
In particular, $P(D)$ is surjective on $C^\infty(X)$ if and only if $P(D)$ is surjective on $\mathscr{D}'(X)$.
\end{theorem}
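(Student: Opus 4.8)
The plan is to reduce to a convenient normal form and then prove the four implications iii)$\Rightarrow$i), iii)$\Rightarrow$ii), ii)$\Rightarrow$iii) and i)$\Rightarrow$iii), which together give the asserted equivalences. After an orthogonal change of coordinates we may assume $W=\R^k\times\{0\}$ with $1\le k\le n$; then $P(\xi)=Q(\xi')$ for a polynomial $Q$ in the first $k$ variables $\xi'=(\xi_1,\dots,\xi_k)$ which is elliptic on $\R^k$, and $\deg P=\deg Q=:m\ge 1$. Consequently $P_m(\xi)=Q_m(\xi')$ with $Q_m$ elliptic, so $\{x\in\R^n;\,P_m(x)=0\}=W^\perp$, and by Lemma \ref{elliptic zeros} (applied to one-dimensional subspaces and to $W^\perp$ itself) also $\{x\in\R^n;\,\sigma_P(x)=0\}=W^\perp$ and $\sigma_P(W^\perp)=0$.

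Three of the implications are then immediate. Given iii), Theorem \ref{minimal sufficient} i) applies with the present $W$ because $\{x;\,P_m(x)=0\}=W^\perp\subseteq W^\perp$, and yields i); likewise Theorem \ref{minimal sufficient} ii) applies because $\{x;\,\sigma_P(x)=0\}=W^\perp\subseteq W^\perp$, and yields ii). Conversely, since $\{x;\,\sigma_P(x)=0\}=W^\perp$ and $\sigma_P(W^\perp)=0$, the hypotheses of Corollary \ref{flat singular} are met, so ii) implies iii). Hence it only remains to close the cycle with i)$\Rightarrow$iii), and this I would prove by contraposition, by a construction parallel to the support estimates in the proof of Theorem \ref{minimal sufficient}.

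So assume iii) fails: for some $x_0\in X$ there is a non-empty compact $K\subseteq(x_0+W)\cap X$ with $\mu:=\min_{y\in K}d_X(y)<\delta:=\dist(\partial_{x_0+W}K,X^c)$, the minimum being attained at a point $y_0\in\mbox{int}_{x_0+W}K$. Identify $x_0+W=\{(x',x_0'')\,;\,x'\in\R^k\}$ with $\R^k$, so that $(x_0+W)\cap X$ becomes an open $X_0\subseteq\R^k$ and $K$ a compact $K_0\subseteq X_0$. Pick $\zeta'\in\C^k$ with $\check{Q}(\zeta')=0$ (possible as $\deg\check{Q}=m\ge 1$) and set $h(x'):=e^{i\langle x',\zeta'\rangle}$, a zero-free function with $\check{Q}(D')h=0$. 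Fix $\eta\in(0,\delta-\mu)$ and a cut-off $\chi\in C_c^\infty(X_0)$ with $\chi\equiv 1$ on $K_0$ and $\supp\chi$ inside the $\eta$-neighbourhood of $K_0$ (which lies in $X_0$ if $\eta$ is small); then $\supp\nabla\chi$ lies within $\eta$ of $\partial_{\R^k}K_0$, and since $d_X(\cdot,x_0'')\ge\delta$ there and $d_X$ is $1$-Lipschitz, $d_X(x)\ge\delta-\eta$ for all $x\in x_0+W$ lying over $\supp\nabla\chi$. Now put $u:=(\chi h)\otimes\delta_{x_0''}\in\mathscr{E}'(X)$, with support the compact set $\supp\chi\times\{x_0''\}\subseteq(x_0+W)\cap X$. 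Because $\check{P}(D)=\check{Q}(D')$ acts only in the $x'$-variables and $\check{Q}(D')h=0$, we get $\check{P}(D)u=(\check{Q}(D')(\chi h))\otimes\delta_{x_0''}$ with $\check{Q}(D')(\chi h)$ supported in $\supp\nabla\chi$, hence $\dist(\supp\check{P}(D)u,X^c)\ge\delta-\eta$; on the other hand $y_0\in\supp u$ since $h$ has no zeros, so $\dist(\supp u,X^c)\le d_X(y_0)=\mu<\delta-\eta$. By \cite[Theorem 10.6.3, vol.\ II]{Hoermander} this shows $X$ is not $P$-convex for supports, which completes the contrapositive. (Alternatively, one could invoke the known geometric description of $P$-convexity for supports for this class, \cite[Theorem 10.8.5]{Hoermander} or \cite{Nakane}, for the implication i)$\Rightarrow$iii).)

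I expect the last construction to be the part requiring the most care: one must localize $u$ to the single hyperplane $\{x''=x_0''\}$ — which is exactly what the tensor factor $\delta_{x_0''}$ achieves, as it forbids any spreading of supports in the transverse directions and collapses $\check{P}(D)u$ to $\check{Q}(D')$ acting on a function in $\R^k$ — while simultaneously keeping $\supp\check{P}(D)u$ at distance $\ge\delta-\eta$ from $X^c$ and ensuring $\supp u$ already reaches the point $y_0$ at distance $\mu<\delta$ from $X^c$. This hinges on the two features of the failure of the minimum principle in $x_0+W$: the relative boundary $\partial_{x_0+W}K$ is far from $X^c$ (distance $\ge\delta$) while $K$ contains the much closer point $y_0$. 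The routine verifications are that the $\eta$-neighbourhood of $K_0$ stays inside $X_0$ and that $\supp\nabla\chi$ genuinely sits over $\{d_X\ge\delta-\eta\}$; note that ellipticity of $P$ on $W$ is used only in the three quick implications (via $\{P_m=0\}=\{\sigma_P=0\}=W^\perp$), whereas i)$\Rightarrow$iii) uses merely that $P$ acts along $W$.
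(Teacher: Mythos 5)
Your proof is correct, and three of the four implications are essentially the paper's: iii)$\Rightarrow$i) and iii)$\Rightarrow$ii) via Theorem \ref{minimal sufficient} together with Lemma \ref{elliptic zeros}, and ii)$\Rightarrow$iii) via Corollary \ref{flat singular} (which in turn is \cite[Corollary 11.3.2]{Hoermander}). The genuine deviation is i)$\Rightarrow$iii): the paper simply cites \cite[Theorem 10.8.5]{Hoermander} to get the equivalence i)$\Leftrightarrow$iii), whereas you construct an explicit witness $u=(\chi h)\otimes\delta_{x_0''}\in\mathscr{E}'(X)$ that breaks the support-distance identity $\dist(\supp u,X^c)=\dist(\supp\check P(D)u,X^c)$ of \cite[Theorem 10.6.3]{Hoermander}. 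Your construction is sound: the tensor factor $\delta_{x_0''}$ localises to the slice $x_0+W$; $\check Q(D')h=0$ for $h(x')=e^{i\langle x',\zeta'\rangle}$ with $\check Q(\zeta')=0$; $\check Q(D')(\chi h)$ is supported where $\nabla\chi\neq0$, hence where $d_X\ge\delta-\eta$; and $y_0\in\supp u$ because $\chi\equiv1$ on $K_0$ and $h$ is zero-free, so $\dist(\supp u,X^c)\le\mu<\delta-\eta$. The trade-off: the paper's citation of \cite[Theorem 10.8.5]{Hoermander} is shorter, but your argument is self-contained and — as you correctly note — reveals that ellipticity of $P$ on $W$ is irrelevant for i)$\Rightarrow$iii) and only ``$P$ acts along $W$'' is used there, which is a small but genuine clarification of what drives the necessity direction. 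One minor remark: when you choose $\eta$, it must simultaneously satisfy $\eta<\delta-\mu$ and be smaller than $\dist_{\R^k}(K_0,\R^k\setminus X_0)$ so that $\supp\chi\subseteq X_0$; you flag this, and it is indeed harmless since $K_0$ is compact in the open set $X_0$.
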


\begin{proof}
That i) and iii) are equivalent is \cite[Theorem 10.8.5]{Hoermander} while the equivalence of ii) and iii) follows from Corollary \ref{flat singular} and Lemma \ref{elliptic zeros}.
\end{proof}

\begin{corollary}
Let $P(x)=\alpha\prod_{j=1}^l(\langle N_j,x\rangle+c_j)$ where $N_j\in\C^n\backslash\{0\}, \alpha, c_j\in\C, j=1,\ldots,l,$. Then for an open subset $X\subseteq\R^n$ the following are equivalent.
\begin{itemize}
	\item[i)] $X$ is $P$-convex for supports.
	\item[ii)] $X$ is $P$-convex for singular supports.
	\item[iii)] $d_X$ satisfies the minimum principle in $x+W$ for every $x\in\R^n$, where $W$ is one of the subspaces $\mbox{span}\{\mbox{Re } N_j,\mbox{Im } N_j\}, j=1\ldots,l$.
\end{itemize}
\end{corollary}

\begin{proof}
Since each of the the first order operators $\langle N_j,D\rangle+c_j$ acts along one of the subspaces $\mbox{span}\{\mbox{Re } N_j,\mbox{Im } N_j\}$ and is elliptic there, the corollary follows directly from Theorem \ref{elliptic} and the fact that these operators commute. 
\end{proof}

\section{Surjectivity of certain augmented differential operators}

In this section we show that for certain partial differential operators, including non-degenerate parabolic operators like the heat operator, surjectivity of $P(D)$ on $\mathscr{D}'(X)$ implies the surjectivity of the augmented operator $P^+(D)$ on $\mathscr{D}'(X\times\R)$, where $P^+(x_1,\ldots,x_n,x_{n+1}):=P(x_1,\ldots,x_n)$. It was shown in \cite{Kalmes1}, that in case of $n=2$ the augmented operator of a surjective partial differential operator $P(D)$ is always surjective while in \cite{Kalmes2} for $n\geq 3$ a hypoelliptic differential operator $P(D)$ was constructed for which there is some open $X\subseteq\R^n$ such that $P(D)$ is surjective on $\mathscr{D}'(X)$ while the augmented operator $P^+(D)$ is not surjective on $\mathscr{D}'(X\times\R)$. Thus, although not true in general, for certain differential operators, including the heat operator, the problem of parameter dependence for solutions of partial differential equations \cite[Problem 9.1]{Bonet} has a positive solution, as will be shown as a consequence of the results from this section in section 4.

Apart from the functional $\sigma_P$ we also use
\[\sigma_P^0(V):=\inf_{t\geq 1,\xi\in\R^n}\frac{\tilde{P}_V(\xi,t)}{\tilde{P}(\xi,t)},\]
where $V\subseteq\R^n$ is again a subspace. Moreover, we set $\sigma_p^0(y):=\sigma_P^0(\mbox{span}\{y\})$ for $y\in\R^n$. Furthermore, we define
\[\pi:\R^{n+1}\rightarrow\R^n,(x_1,\ldots,x_{n+1})\mapsto (x_1,\ldots,x_n)\]
and we set $A':=\pi(A)$ for $A\subseteq\R^{n+1}$, respectively $x':=\pi(x)$ for $x\in\R^{n+1}$. The reason for using $\sigma_P^0$ is the following lemma. For its proof see \cite[Lemma 1]{Frerick}.

\begin{lemma}\label{equivalence}
For a polynomial $P\in\C[X_1,\ldots,X_n]$ and a subspace $V\subseteq\R^{n+1}$ the following hold.
\begin{itemize}
	\item[i)] $\sigma_{P^+}(V'\times\R)=\sigma_{P^+}(V'\times\{0\})=\sigma_P^0(V')$.
	\item[ii)] $\sigma_{P^+}(V)=0$ if and only if $\sigma_P^0(V')=0$.
\end{itemize}
\end{lemma}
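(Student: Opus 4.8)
The plan is to unwind the definitions of $\tilde{P}_V$ and $\tilde{P}$ for the augmented polynomial $P^+$ and reduce everything to the original polynomial $P$. The starting observation is that $P^+$ does not depend on the last coordinate: for $\eta = (\eta', \eta_{n+1}) \in \R^{n+1}$ we have $P^+(\eta) = P(\eta')$. Consequently, for any subspace $V \subseteq \R^{n+1}$ and any $\zeta = (\zeta', \zeta_{n+1}) \in \R^{n+1}$ and $t \geq 1$,
\[
\tilde{P^+}_V(\zeta, t) = \sup_{x \in V, |x| \leq t} |P^+(x + \zeta)| = \sup_{x \in V, |x| \leq t} |P(x' + \zeta')|.
\]
The key point is now to understand the set $\{x' : x \in V, |x| \leq t\}$. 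For $V = V' \times \R$, every vector $(x', x_{n+1})$ with $x' \in V'$ and $|x'| \leq t$ can be completed (take $x_{n+1} = 0$) to a vector of $V$ of norm $\leq t$, and conversely projecting shrinks the norm; so $\{x' : x \in V' \times \R, |x| \leq t\} = \{x' \in V' : |x'| \leq t\}$. The same identity holds for $V = V' \times \{0\}$. Hence in both cases $\tilde{P^+}_V(\zeta, t) = \tilde{P}_{V'}(\zeta', t)$ and, taking $V' = \R^n$, also $\tilde{P^+}(\zeta, t) = \tilde{P}(\zeta', t)$, with $\zeta'$ ranging over all of $\R^n$ as $\zeta$ ranges over $\R^{n+1}$. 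Taking the infimum over $\xi \in \R^n$ (or $\zeta \in \R^{n+1}$, which gives the same thing since the quotient depends only on $\zeta'$) and $t \geq 1$ yields
\[
\sigma_{P^+}(V' \times \R) = \sigma_{P^+}(V' \times \{0\}) = \sigma_P^0(V'),
\]
which is part~i).

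For part~ii), one inclusion is immediate: if $\sigma_P^0(V') = 0$ then $V' \times \{0\} \subseteq V' \times \R$ is not quite what we want, so instead note that $V \subseteq V' \times \R$ always, hence $\tilde{P^+}_V \leq \tilde{P^+}_{V' \times \R}$ pointwise, so $\sigma_{P^+}(V) \leq \sigma_{P^+}(V' \times \R) = \sigma_P^0(V') = 0$, forcing $\sigma_{P^+}(V) = 0$. For the converse, suppose $\sigma_P^0(V') > 0$; I want to show $\sigma_{P^+}(V) > 0$. Here one should replace the rough bound $V \subseteq V' \times \R$ by a lower bound for $\tilde{P^+}_V$. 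Given $x' \in V'$ with $|x'| \leq t/\sqrt{2}$, say, there is (by definition of the projection) some $x = (x', x_{n+1}) \in V$; but $x_{n+1}$ need not be controlled. The clean way around this: the projection $\pi|_V : V \to V'$ is a linear surjection, so there is a constant $c_V > 0$ (an operator-norm bound for a right inverse) with the property that every $x' \in V'$ lifts to some $x \in V$ with $|x| \leq c_V |x'|$. Then for $|x'| \leq t/c_V$ the lift satisfies $|x| \leq t$, giving $\tilde{P^+}_V(\zeta, t) \geq \tilde{P}_{V'}(\zeta', t/c_V)$, and combining with a standard comparison of $\tilde P(\zeta', t)$ and $\tilde P(\zeta', t/c_V)$ (polynomial growth in $t$, cf.\ the elementary estimates in \cite[Section 11.3]{Hoermander}) one concludes $\sigma_{P^+}(V) \geq \sigma_P^0(V')/c \cdot (\text{positive constant}) > 0$.

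The main obstacle is precisely this last comparison: the naive containment $V \subseteq V' \times \R$ only gives the ``easy'' inequality, and one must produce a bounded linear right inverse to $\pi|_V$ together with the observation that rescaling the dilation parameter $t$ by a fixed constant changes $\tilde{P}$ and $\tilde{P}_{V'}$ only up to constants depending on $\deg P$ — this is where the infimum over $t \geq 1$ (rather than over all $t > 0$) and the $\liminf$ in $\sigma_P$ versus the plain $\inf$ in $\sigma_P^0$ interact, so some care is needed to keep the quotients uniformly bounded below. Since the full details of this estimate already appear in \cite[Lemma 1]{Frerick}, it suffices here to indicate the reduction and invoke that reference.
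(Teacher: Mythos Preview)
The paper gives no proof of this lemma at all; it simply refers to \cite[Lemma~1]{Frerick}, exactly the reference you invoke at the end. So there is nothing to compare against beyond the citation itself, and your decision to sketch the reduction and then cite \cite{Frerick} matches the paper in spirit.

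That said, your sketch of part~i) skips over the one genuinely non-formal step. You write ``taking the infimum over $\xi\in\R^n$ \ldots\ and $t\ge 1$ yields $\sigma_{P^+}(V'\times\R)=\sigma_P^0(V')$'', but the definition of $\sigma_{P^+}$ is $\inf_{t\ge 1}\liminf_{\zeta\to\infty}$, not $\inf_{t,\zeta}$. The whole point of the lemma is precisely that the $\liminf$ in $\R^{n+1}$ collapses to a plain infimum over $\R^n$: since the quotient $\tilde{P^+}_V(\zeta,t)/\tilde{P^+}(\zeta,t)$ depends only on $\zeta'$, and since for any fixed $\zeta'\in\R^n$ one may send $\zeta_{n+1}\to\infty$ to get $|\zeta|\to\infty$ while the quotient stays equal to its value at $\zeta'$, one has
\[
\liminf_{\zeta\to\infty,\ \zeta\in\R^{n+1}}\frac{\tilde{P^+}_V(\zeta,t)}{\tilde{P^+}(\zeta,t)}=\inf_{\zeta'\in\R^n}\frac{\tilde{P}_{V'}(\zeta',t)}{\tilde{P}(\zeta',t)}.
\]
This is what converts $\sigma_{P^+}$ into $\sigma_P^0$ and is the heart of the identity; you allude to the $\liminf$/$\inf$ issue only later, in the discussion of part~ii), where it is actually less central. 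For part~ii) your outline (monotonicity $V\subseteq V'\times\R$ for one direction, a bounded linear section of $\pi|_V$ together with the equivalence of the weight functions $\tilde P(\cdot,t)$ for comparable $t$ for the other) is correct and is indeed how the argument in \cite{Frerick} goes.
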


We continue with an easy geometrical observation. For its proof we need the following trivial proposition.

\begin{proposition}\label{topology}
Let $X$ and $Y$ be Hausdorff topological spaces, $f:X\rightarrow Y$ continuous and open, and let $K\subseteq X$ be compact. Then $\partial f(K)\subseteq f(\partial K)$.
\end{proposition}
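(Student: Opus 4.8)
The plan is to prove Proposition \ref{topology} by a direct point-set argument, showing both that $f(K)$ is closed (so that its boundary makes sense in the expected way) and that no interior point of $K$ maps to a boundary point of $f(K)$. First I would note that since $K$ is compact and $f$ is continuous, $f(K)$ is compact, hence closed in the Hausdorff space $Y$; thus $\partial f(K)=f(K)\setminus\operatorname{int}f(K)$, and in particular $\partial f(K)\subseteq f(K)$, so every point of $\partial f(K)$ has at least one preimage in $K$.

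Next I would take an arbitrary $y\in\partial f(K)$ and pick $x\in K$ with $f(x)=y$. The goal is to show $x\in\partial K$, i.e. $x\notin\operatorname{int}K$. Suppose for contradiction that $x\in\operatorname{int}K$. Then there is an open neighbourhood $U\subseteq K$ of $x$ in $X$. Since $f$ is an open map, $f(U)$ is open in $Y$ and contains $y=f(x)$. But $f(U)\subseteq f(K)$, so $y$ is an interior point of $f(K)$, contradicting $y\in\partial f(K)=f(K)\setminus\operatorname{int}f(K)$. Hence $x\in\partial K$, and therefore $y=f(x)\in f(\partial K)$. Since $y\in\partial f(K)$ was arbitrary, this gives $\partial f(K)\subseteq f(\partial K)$.

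There is essentially no serious obstacle here; the only point requiring a little care is making sure the ambient spaces are such that $f(K)$ is closed (this uses compactness of $K$ together with $Y$ Hausdorff), so that $\partial f(K)$ is genuinely $f(K)\setminus\operatorname{int}f(K)$ rather than $\overline{f(K)}\setminus\operatorname{int}f(K)$ with a possibly larger closure. With this observation in place the argument is just the two short steps above, and openness of $f$ is used exactly once, in the contradiction step.
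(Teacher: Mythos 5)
Your proof is correct and takes essentially the same approach as the paper's: pick a preimage $x\in K$ of $y\in\partial f(K)$ and derive a contradiction from $x\in\operatorname{int}K$ using openness of $f$. The only difference is that you make explicit the preliminary observation that $f(K)$ is closed (hence $\partial f(K)\subseteq f(K)$, so a preimage in $K$ exists), a point the paper leaves implicit, and this is indeed where the Hausdorff hypothesis is used.
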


\begin{proof}
Let $y\in\partial f(K)$ and $x\in K$ with $f(x)=y$. Suppose that $x\notin\partial K$. Then there is an open neighborhood $V$ of $x$ such that $V\subseteq K$. Since $f$ is open it follows that $f(V)$ is an open neighborhood of $y=f(x)$ which is contained in $f(K)$ contradicting $y\in\partial f(K)$.
\end{proof}

\begin{proposition}\label{boundary distance}
Let $X\subseteq\R^n$ be open and $F\subseteq\R^{n+1}$ be closed. If $d_X$ satisfies the minimum principle in $F'$ then $d_{X\times\R}$ satisfies the minimum principle in $F$. Moreover, if $F$ is an affine subspace such that $\{x_{n+1};\,x\in F\}$ is bounded and $d_{X\times\R}$ satisfies the minimum principle in $F$ then $d_X$ satisfies the minimum principle in $F'$.
\end{proposition}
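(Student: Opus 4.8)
The plan is to reduce both assertions to the characterization of the minimum principle via $W$-regular curves (Lemma~\ref{equivalent}), but it turns out to be cleaner to argue directly with the definition of the minimum principle for $d_{X\times\R}$ and $d_X$ together with the explicit formula for the boundary distance of a product. The first observation is the key identity
\[
d_{X\times\R}(x)=\dist\big(x,(X\times\R)^c\big)=\dist\big(x',X^c\big)=d_X(x'),\qquad x=(x',x_{n+1})\in X\times\R,
\]
which holds because $(X\times\R)^c=X^c\times\R$ and the nearest point of $X^c\times\R$ to $(x',x_{n+1})$ is obtained by keeping the last coordinate fixed. Thus $d_{X\times\R}=d_X\circ\pi$ on $X\times\R$, where $\pi$ is the projection from the excerpt. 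Note also that $\pi$ is continuous and open.

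For the first statement, assume $d_X$ satisfies the minimum principle in $F'$ and let $K\subseteq F\cap(X\times\R)$ be compact and non-empty. Then $K':=\pi(K)\subseteq F'\cap X$ is compact, and by Proposition~\ref{topology} applied to $\pi|_F\colon F\to F'$ (which is continuous and open as a map between the affine/closed sets carrying the subspace topology, since $\pi$ is a coordinate projection) we get $\partial_{F'}K'\subseteq\pi(\partial_F K)$. Now compute, using $d_{X\times\R}=d_X\circ\pi$ and the minimum principle for $d_X$ in $F'$,
\[
\min_{x\in K}d_{X\times\R}(x)=\min_{y\in K'}d_X(y)=\min_{y\in\partial_{F'}K'}d_X(y)\ \ge\ \min_{y\in\pi(\partial_F K)}d_X(y)=\min_{x\in\partial_F K}d_{X\times\R}(x).
\]
Since the reverse inequality $\min_{K}d_{X\times\R}\le\min_{\partial_F K}d_{X\times\R}$ is automatic (as $\partial_F K\subseteq K$), equality holds, which is the minimum principle for $d_{X\times\R}$ in $F$.

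For the converse, suppose $F$ is an affine subspace of $\R^{n+1}$ with $\{x_{n+1}:x\in F\}$ bounded, and that $d_{X\times\R}$ satisfies the minimum principle in $F$; we must show $d_X$ satisfies the minimum principle in $F'$. Given a compact non-empty $K_0\subseteq F'\cap X$, the boundedness of the last coordinate on $F$ means $K:=F\cap\pi^{-1}(K_0)=F\cap(K_0\times\R)$ is a closed bounded, hence compact, subset of $F\cap(X\times\R)$, and $\pi(K)=K_0$ because $\pi|_F\colon F\to F'$ is surjective (an affine surjection between these affine subspaces). The subtle point is the relation between $\partial_F K$ and $\partial_{F'}K_0$: since $\pi|_F$ is an open map with $K=(\pi|_F)^{-1}(K_0)$, one has $\partial_F K\subseteq(\pi|_F)^{-1}(\partial_{F'}K_0)$ — indeed if $x\in K$ with $\pi(x)\in\mathrm{int}_{F'}K_0$ then $(\pi|_F)^{-1}(\mathrm{int}_{F'}K_0)$ is an open subset of $F$ contained in $K$ containing $x$, so $x\notin\partial_F K$. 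Consequently $\pi(\partial_F K)\subseteq\partial_{F'}K_0$, and together with $d_{X\times\R}=d_X\circ\pi$ and the minimum principle for $d_{X\times\R}$ in $F$ we obtain
\[
\min_{y\in K_0}d_X(y)=\min_{x\in K}d_{X\times\R}(x)=\min_{x\in\partial_F K}d_{X\times\R}(x)=\min_{x\in\partial_F K}d_X(\pi(x))\ \ge\ \min_{y\in\partial_{F'}K_0}d_X(y),
\]
and again the reverse inequality is trivial, so $d_X$ satisfies the minimum principle in $F'$.

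The main obstacle is bookkeeping about boundaries under the projection: one needs both inclusions $\partial f(K)\subseteq f(\partial K)$ (Proposition~\ref{topology}, used in the first part) and $f(\partial K)\subseteq\partial f(K)$ when $K$ is a full preimage (used in the second part), and one must be careful that these are applied to $\pi$ restricted to the relevant closed/affine set, where it remains continuous, open, and — in the converse direction — surjective. The hypothesis that the last coordinate is bounded on $F$ is exactly what guarantees that $K_0$ compact pulls back to a compact $K$ inside $F$; without it $K$ would be an unbounded cylinder and the argument fails. Everything else is the elementary product formula $d_{X\times\R}=d_X\circ\pi$ and the trivial half of the minimum-principle inequality.
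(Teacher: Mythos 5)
Your proof is correct and rests on the same three pillars as the paper's: the product formula $d_{X\times\R}=d_X\circ\pi$, Proposition~\ref{topology} applied to $\pi|_F$ for the forward direction, and the boundedness hypothesis to make $(K_0\times\R)\cap F$ compact in the converse. The forward direction differs only cosmetically: you argue via the chain of inequalities plus the trivial reverse $\min_K\le\min_{\partial_F K}$, whereas the paper argues by contradiction from a hypothetical $x_0\in K$ with too small a boundary distance; these are the same proof.

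In the converse direction there is a small genuine difference. The paper uses the boundedness hypothesis to show $F-F\subseteq\R^n\times\{0\}$, deduces that $\pi|_F\colon F\to F'$ is injective and hence a homeomorphism, and from this obtains the exact correspondence $\pi|_F(\partial_F L)=\partial_{F'}K$. You instead prove only the one-sided inclusion $\pi(\partial_F K)\subseteq\partial_{F'}K_0$, needing only continuity of $\pi|_F$ and the observation that $K$ is the full $\pi|_F$-preimage of $K_0$; the missing direction of the inequality is then supplied by the trivial $\partial_{F'}K_0\subseteq K_0$. This is slightly lighter, since it sidesteps the explicit homeomorphism argument (injectivity is never used), though the boundedness hypothesis still enters essentially to get compactness of $K$. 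Both approaches are valid; yours is a modest streamlining of the same idea rather than a different method.
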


\begin{proof}
Let $d_X$ satisfies the minimum principle in $F'$. Let $K\subseteq F\cap (X\times\R)$ be compact. We assume that there is $x_0\in K$ such that
\[d_{X\times\R}(x_0)<\min_{x\in\partial_F K}d_{X\times\R}(x).\]
Since $d_{X\times\R}(x)=d_X(x')$ it follows
\begin{eqnarray*}
	d_X(x_0')&=&d_{X\times\R}(x_0)<\min_{x\in\partial_F K}d_{X\times\R}(x)\\
	&=&\min\{d_X(x');x'\in\pi_{|F}(\partial_F K)\}.
\end{eqnarray*}
Applying Proposition \ref{topology} to $\pi_{|F}F\rightarrow F'$ we obtain $\pi_{|F}(\partial_F K)\supseteq\partial_{F'}K'$ so that the previous inequality yields
\[d_X(x_0')<\min_{\partial_{F'}K'}d_X(x').\]
Since $x_0'\in K'$ and since $d_X$ satisfies the minimum principle this gives a contradiction. Thus
\[d_{X\times\R}(x_0)\geq\min_{x\in\partial_F K}d_{X\times\R}(x)\]
for every $x_0\in K$ so that $d_{X\times\R}$ satisfies the minimum principle in $F$.

Now assume that $F$ is an affine subspace such that $\{x_{n+1};\,x\in F\}$ is bounded and that $d_{X\times\R}$ satisfies the minimum principle in $F$. Since $\{x_{n+1};\,x\in F\}$ is bounded the subspace $F-F$ of $\R^{n+1}$ is contained in $\R^n\times\{0\}$. It follows for $x,y\in F$ with $\pi(x)=\pi(y)$ that $x'=y'$ and because of $x-y\in F-F\subseteq\R^n\times\{0\}$ we obtain $x=y$. Therefore, $\pi_{|F}$ is injective so that
\[\pi_{|F}:F\rightarrow F'\]
is a homeomorphism.

Let $K\subseteq F'\cap X$ be compact. It follows that $K\times\R\subseteq (F'\times\R)\cap (X\times\R)$, thus $(K\times\R)\cap F\subseteq F\cap(X\times\R)$. Because $\{x_{n+1};\,x\in F\}$ is bounded
\[L:=(K\times\R)\cap F\]
is a compact subset of $F\cap( X\times\R)$ with $\pi(L)=K$. Therefore, and since $d_{X\times\R}$ satisfies the minimum principle in $F$ we have
\begin{eqnarray*}
\min_{x'\in K}d_X(x')&=&\min\{d_X(x');\,x'\in\pi(L)\}=\min\{d_X(\pi(x));\,x\in L\}\\
&=&\min_{x\in L}d_{X\times\R}(x)=\min_{x\in\partial_F L}d_{X\times\R}(x)\\
&=&\min\{d_X(x');\,x'\in\pi_{|F}(\partial_F L)\}\\
&=&\min\{d_X(x');\,x'\in\partial_F' K\}=\min_{x'\in\partial_F' K}d_X(x'),
\end{eqnarray*}
where we used that $\pi_{|F}$ is a homeomorphism onto $F'$ and $\pi(L)=K$ in the last step.
\end{proof}

The next proposition will be useful in the proof of the following theorem.

\begin{proposition}\label{invariant subspace}
Let $P$ be a polynomial and let $V\subseteq\R^n$ be a subspace such that $P$ is constant on $V$. Then
\[\forall\,x\in\R^n,\xi\in V:\;P(x+\xi)=P(x),\]
i.e.\ $P$ acts along $V^\perp$.
\end{proposition}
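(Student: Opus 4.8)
The plan is to reduce to a one-variable polynomial argument via a change of coordinates. After an invertible linear change of variables I may assume $V = \R^k \times \{0\}$ for some $1 \le k \le n$; note that linear changes of coordinates preserve both the hypothesis ``$P$ vanishes on $V$'' and the conclusion ``$P(x+\xi) = P(x)$ for $\xi \in V$'', provided I track how $V^\perp$ transforms (but since the final statement ``$P$ acts along $V^\perp$'' is just a restatement of the identity $P(x+\xi)=P(x)$ for $\xi\in V$, I do not even need to worry about orthogonality being preserved). So it suffices to show: if $Q \in \C[X_1,\ldots,X_n]$ vanishes identically on $\R^k \times \{0\}$, then $Q$ does not depend on $X_1,\ldots,X_k$.

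The key step is the following. Write $y = (y_1,\ldots,y_k)$ and $z = (z_1,\ldots,z_{n-k})$, so $Q = Q(y,z)$, and expand $Q$ as a polynomial in $z$ with coefficients in $\C[y]$: $Q(y,z) = \sum_{\beta} c_\beta(y) z^\beta$. The hypothesis says $Q(y,0) = 0$ for all $y \in \R^k$; since a polynomial in $\C[y_1,\ldots,y_k]$ vanishing on all of $\R^k$ is the zero polynomial, we get $c_0 \equiv 0$, i.e.\ every monomial of $Q$ contains a positive power of some $z_j$. This already shows $Q$ vanishes whenever $z = 0$, but I want more: I want independence of $y$.

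Here is where I need to be a little careful, because the claim as stated is actually \emph{stronger} than just $c_0 = 0$ — it asserts $Q(y,z)$ is independent of $y$ altogether. Wait: that is not true for an arbitrary $Q$ vanishing on $\R^k\times\{0\}$ (e.g.\ $Q = y_1 z_1$ vanishes on $\R^k\times\{0\}$ but depends on $y_1$). So the hypothesis must be read more strongly: $P$ \emph{vanishes on $V$} should mean $P \equiv 0$ as a polynomial restricted to the affine translates, i.e.\ $P(x) = 0$ for every $x \in V$ — and the intended reading that makes the proposition correct is that the hypothesis is really about $V$ being invariant under translation, OR (more likely, and this is what I would go with) the polynomial $P$ in the proposition already has the special form forced by the context: it is $P_m$ or a $\sigma$-zero-set situation where $V^\perp = W^\perp$. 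Rereading: the intended statement is that $P$ vanishing on the \emph{subspace} $V$ forces translation-invariance; the correct proof must therefore use that $V$ is a linear subspace and that $P$ vanishes on \emph{all} of it including arbitrarily large multiples.

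So the main idea, done correctly: fix $x \in \R^n$ and $\xi \in V$, and consider the single-variable polynomial $t \mapsto P(x + t\xi)$. The hard part — and the crux of the argument — is to show this polynomial is constant in $t$. Decompose $x = x_V + x_{V^\perp}$ with $x_V \in V$; since $V$ is a subspace, $x_V + t\xi \in V$ for all $t$, but that does not immediately help because $P$ need not vanish off $V$. Instead: the right approach is to note that it suffices to prove $P$ has no monomial involving a coordinate direction in $V$, which follows once one knows $P$ vanishes on \emph{every} translate of $V$ — and THAT is what must be extracted. Concretely, I would argue: since $P$ vanishes on the subspace $V$ and $V$ is $n$-dimensional-in-the-relevant-coordinates, for each fixed $z$ the function $y \mapsto P(y,z)$ is... no. Let me instead just follow the honest route that the proposition is surely used only in the case where the conclusion genuinely holds, and present: \emph{By a linear change of coordinates assume $V = \R^k \times \{0\}$. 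Expanding $P(y,z) = \sum_\beta c_\beta(y) z^\beta$ and using that $P(\cdot,0)\equiv 0$ on $\R^k$ gives $c_0 = 0$. For the full claim one uses that $P$ vanishes on $V$ together with} — and here I would invoke the structural hypothesis from the point of use (that $V$ arises as an invariant subspace, e.g.\ $P = P_m$ homogeneous, or the kernel-direction of $\sigma_P$), so that $c_\beta$ for $|\beta| \ge 1$ are forced constant. The main obstacle, then, is precisely pinning down the correct reading of ``$P$ vanishes on $V$'' so that the conclusion is true; once that is fixed, the proof is the routine one-variable/coefficient-comparison argument sketched above.
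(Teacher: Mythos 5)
Your instinct is right, and your counterexample is genuine: with $V=\R\times\{0\}\subseteq\R^2$, the polynomial $Q(y_1,z_1)=y_1z_1$ vanishes identically on $V$, but for $\xi=(\xi_1,0)\in V$ one has $Q(x+\xi)=(x_1+\xi_1)x_2\neq Q(x)$ whenever $\xi_1 x_2\neq 0$. So the proposition, with no hypothesis beyond ``$P$ vanishes on the subspace $V$'', is false, and there is no proof to be had; you are not missing a trick.

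What you have actually found is a flaw in the paper's own argument. After normalizing $V=\R^k\times\{0\}^{n-k}$, the Taylor expansion of $P|_V\equiv 0$ yields $\partial^\alpha P(0)=0$ for all $\alpha\in\N_0^n\cap V$, i.e.\ for $\alpha$ with $\alpha_{k+1}=\dots=\alpha_n=0$. The proof then asserts $P(x)=\sum_{\alpha\in\N_0^n\cap V^\perp}\frac{\partial^\alpha P(0)}{\alpha!}x^\alpha$, which would require $\partial^\alpha P(0)=0$ for every $\alpha\notin\N_0^n\cap V^\perp$. But the mixed multi-indices $\alpha$ with $\alpha_j>0$ for some $j\leq k$ \emph{and} $\alpha_i>0$ for some $i>k$ (for instance $\alpha=(1,0,\dots,0,1)$) belong to neither $V$ nor $V^\perp$, and the first step says nothing about them. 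These are exactly the terms carrying your example $y_1z_1$. So the paper's deduction does not go through.

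Your guess about how to salvage it is also essentially correct, though it deserves to be pinned down. Where the proposition is invoked, in Theorem~\ref{semi-elliptic}, one takes $P=P_m$, the principal part of a \emph{semi-elliptic} polynomial, and $V=Z=\{x: P_m(x)=0\}$. Semi-ellipticity, not the mere vanishing of $P_m$ on $Z$, is what forces the conclusion: writing $m=\max_j m_j$ and $J=\{j: m_j=m\}$, one checks directly from $|\alpha|=m$ and $|\alpha:\mathbf m|=1$ that $P_m$ involves only the variables $\xi_j$, $j\in J$, and that $Z=\{\xi:\xi_j=0\ \forall j\in J\}$; hence $P_m$ is translation-invariant in the $Z$-directions, i.e.\ acts along $Z^\perp$. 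Note that even strengthening the hypotheses of the proposition to ``$P$ homogeneous and $V$ equals the full real zero set of $P$'' does not rescue it: $P(x_1,x_2)=x_1^4+x_1^2x_2^2$ is homogeneous with real zero set exactly $\{x_1=0\}$, yet it depends on $x_2$. So the proposition must either carry the semi-elliptic (or equivalent structural) hypothesis, or be replaced by the direct computation for $P_m$ above. Your proposal correctly diagnoses the problem; what it lacks is the explicit identification of the gap in the Taylor-coefficient step and the precise structural fact from semi-ellipticity that makes the application legitimate.
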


\begin{proof}
By an appropriate linear change of coordinates we may assume without loss of generality that $V=\R^k\times\{0\}^{n-k}$ with $k=\mathrm{dim}V$. Since $P$ is constant on $V$ it follows
\[\forall\,\xi\in V, \alpha\in \N_0^n\cap V,\alpha\neq 0:\partial^\alpha P(\xi)=0.\]
Writing $\alpha\in\N_0^n$ as $\alpha=\alpha'+\alpha''$ with $\alpha'\in\N_0^n\cap V$ and $\alpha''\in\N_0^n\cap V^\perp$ it follows
\[\forall\,\xi\in V,\alpha\in\N_0^n, \alpha'\neq 0:\partial^\alpha P(\xi)=\partial^{\alpha''}\partial^{\alpha'} P(\xi)=0.\]
Since $\xi_{k+1}=\ldots =\xi_n=0$ for every $\xi\in V$ this implies together with Taylor's Theorem
\begin{eqnarray*}
	P(x+\xi)&=&\sum_{\alpha\in\N_0^n} \frac{\partial^\alpha P(0)}{\alpha!}(x+\xi)^\alpha=\sum_{\alpha\in\N_0^n\cap V^\perp} \frac{\partial^\alpha P(0)}{\alpha!}(x+\xi)^\alpha\\
	&=&\sum_{\alpha\in\N_0^n\cap V^\perp}\frac{\partial^\alpha P(0)}{\alpha!}x^\alpha=P(x)
\end{eqnarray*}
for every $x\in\R^n$ and $\xi\in V$.
\end{proof}

Recall that a polynomial $P$ is called semi-elliptic if it is possible to write
\[P(\xi)=\sum_{|\alpha:\mathbf{m}|\leq 1}c_\alpha \xi^\alpha\]
such that
\[\forall\,\xi\in\R^n\backslash\{0\}:\, \sum_{|\alpha:\mathbf{m}|= 1}c_\alpha \xi^\alpha\neq 0,\]
where $\mathbf{m}\in\N_0^n$ and $|\alpha:\mathbf{m}|=\sum_{j=1}^n\alpha_j/m_j$. With $m_1=1$ and $m_j=2$ for $j>1$ it follows that the polynomial inducing the heat operator is semi-elliptic.

\begin{theorem}\label{semi-elliptic}
Let $P\in\C[X_1,\ldots,X_n]$ be a semi-elliptic polynomial with principal part $P_m$ and let $X\subseteq\R^n$ be open. Then the following are equivalent.
\begin{itemize}
	\item[i)] $X\times\R$ is $P^+$-convex for singular supports.
	\item[ii)] $X$ is $P_m$-convex for supports.
	\item[iii)] $X$ is $P_m$-convex for singular supports.
	\item[iv)] $d_X$ satisfies the minimum principle in $x+\{y\in\R^n;\,P_m(y)=0\}^\perp$ for every $x\in\R^n$.
\end{itemize}
\end{theorem}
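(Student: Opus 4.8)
The plan is to read off the equivalence of ii), iii) and iv) from Theorem~\ref{elliptic} applied to the ordinary principal part $P_m$, and to deduce i) $\Leftrightarrow$ iv) from Corollary~\ref{flat singular} applied to $P^+$; the bridge is a description of $\{y\in\R^n;\,\sigma_P^0(y)=0\}$. Write $P=\sum_{|\alpha:\mathbf{m}|\le1}c_\alpha\xi^\alpha$ with semi-principal part $P_{\mathbf{m}}:=\sum_{|\alpha:\mathbf{m}|=1}c_\alpha\xi^\alpha$ nowhere zero off the origin, and set $m:=\deg P$, $J:=\{j;\,m_j=m\}$, and $W:=\{\xi\in\R^n;\,\xi_j=0\ \text{for}\ j\notin J\}$, so that $W^\perp=\{\xi;\,\xi_j=0\ \text{for}\ j\in J\}$. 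Every monomial $\xi^\alpha$ of $P$ satisfies $|\alpha|\le(\max_j m_j)\,|\alpha:\mathbf{m}|\le\max_j m_j$, and the coefficient of $\xi_j^{m_j}$ in $P$ equals $P_{\mathbf{m}}(e_j)\ne0$; hence $m=\max_j m_j$, $J\ne\emptyset$, and the degree-$m$ monomials of $P$ are exactly those $c_\alpha\xi^\alpha$ with $|\alpha:\mathbf{m}|=1$ and $\operatorname{supp}\alpha\subseteq J$. Consequently $P_m$ is homogeneous of degree $m$, involves only the variables $\xi_j$, $j\in J$, and coincides with the restriction of $P_{\mathbf{m}}$ to $W$; thus $\{\xi;\,P_m(\xi)=0\}=W^\perp$ and $P_m$ is nonvanishing on $W\setminus\{0\}$. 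So $P_m$ vanishes on $W^\perp$ and therefore acts along $W$ by Proposition~\ref{invariant subspace}, and being homogeneous and nonvanishing on $W\setminus\{0\}$ it is elliptic on $W$. Since $\{P_m=0\}^\perp=W$, Theorem~\ref{elliptic} gives ii) $\Leftrightarrow$ iii) $\Leftrightarrow$ iv).

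The remaining content is the following claim: for every subspace $V\subseteq\R^n$ one has $\sigma_P^0(V)=0$ if and only if $V\subseteq W^\perp$. Granting it, put $\widetilde W:=W\times\{0\}\subseteq\R^{n+1}$, a nontrivial subspace, and note $P^+\ne0$. By Lemma~\ref{equivalence}(ii) applied to $V=\operatorname{span}\{x\}$ and the claim, $\sigma_{P^+}(x)=0\Leftrightarrow\sigma_P^0(x')=0\Leftrightarrow x'\in W^\perp$, so $\{x\in\R^{n+1};\,\sigma_{P^+}(x)=0\}=W^\perp\times\R=\widetilde W^\perp$; and by Lemma~\ref{equivalence}(i) applied to $V=W^\perp\times\R$, together with the claim for $V=W^\perp$, $\sigma_{P^+}(\widetilde W^\perp)=\sigma_P^0(W^\perp)=0$. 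Hence Corollary~\ref{flat singular} applies to $P^+$ on $X\times\R$ and shows that i) holds if and only if $d_{X\times\R}$ satisfies the minimum principle in $z+\widetilde W$ for every $z\in\R^{n+1}$. Since $\{x_{n+1};\,x\in z+\widetilde W\}$ is a single point, hence bounded, and $(z+\widetilde W)'=z'+W$, both parts of Proposition~\ref{boundary distance} show this to be equivalent to $d_X$ satisfying the minimum principle in $w+W$ for every $w\in\R^n$, that is, to iv). This yields i) $\Leftrightarrow$ iv), so everything reduces to the claim.

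To prove the claim, suppose first $V\subseteq W^\perp$. On $W^\perp$ the polynomial $P$ involves only variables of $\mathbf{m}$-weight strictly less than $m$, hence has degree $<m$ there, so $\tilde P_V(0,t)\le\tilde P_{W^\perp}(0,t)\le C\,t^{m-1}$ for $t\ge1$; on the other hand, for $j_0\in J$ the function $t\mapsto P(t\,e_{j_0})$ is a polynomial of degree $m$ in $t$, so $\tilde P(0,t)\ge|P(t\,e_{j_0})|\ge c\,t^m$ for $t$ large. Letting $t\to\infty$ in the quotient evaluated at $\xi=0$ gives $\sigma_P^0(V)=0$. Conversely, if $V\not\subseteq W^\perp$ pick $y\in V$ with $P_m(y)\ne0$; as $\sigma_P^0(V)\ge\sigma_P^0(\operatorname{span}\{y\})$ it suffices to show the latter is positive. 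The coefficient of $s^m$ in the one-variable polynomial $s\mapsto P(\xi+sy)$ is $P_m(y)$, so the elementary estimate that the leading coefficient of a polynomial of degree $\le m$ is dominated by a constant multiple of its supremum norm on $[-t,t]$ yields $\tilde P_{\operatorname{span}\{y\}}(\xi,t)\ge\sup_{|s|\le t}|P(\xi+sy)|\ge c_0\bigl(|P(\xi)|+t^m|P_m(y)|\bigr)$ for all $\xi\in\R^n$, $t\ge1$. For the denominator, since every monomial of $P$ has $\mathbf{m}$-weight at most $1$, the weighted arithmetic--geometric mean inequality gives $|P(\zeta)|\le C_1\bigl(1+\sum_j|\zeta_j|^{m_j}\bigr)$ for all $\zeta$, and hence $\tilde P(\xi,t)\le C_2\bigl(1+\sum_j|\xi_j|^{m_j}+t^m\bigr)$ for $t\ge1$. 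Moreover there is $M\ge1$ with $\sum_j|\xi_j|^{m_j}\ge M\ \Rightarrow\ |P(\xi)|\ge\tfrac12\sum_j|\xi_j|^{m_j}$: writing $\xi_j=\rho^{1/m_j}\omega_j$ with $\rho=\sum_j|\xi_j|^{m_j}$ and $\sum_j|\omega_j|^{m_j}=1$, one has $P(\xi)=\rho\,P_{\mathbf{m}}(\omega)+\sum_{|\alpha:\mathbf{m}|<1}c_\alpha\rho^{|\alpha:\mathbf{m}|}\omega^\alpha$, where the exponents $|\alpha:\mathbf{m}|<1$ are bounded by $1-\delta$ for some $\delta>0$, so the sum is $O(\rho^{1-\delta})$ uniformly for $\omega$ in the compact set $\{\sum_j|\omega_j|^{m_j}=1\}$, on which $|P_{\mathbf{m}}|$ is bounded below. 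Splitting into the cases $\sum_j|\xi_j|^{m_j}\le M$ and $\sum_j|\xi_j|^{m_j}>M$ then gives $\tilde P(\xi,t)\le C_3\bigl(|P(\xi)|+t^m\bigr)$ for all $\xi$ and $t\ge1$; dividing by the lower bound above shows $\tilde P_{\operatorname{span}\{y\}}(\xi,t)/\tilde P(\xi,t)$ is bounded below by a positive constant, so $\sigma_P^0(\operatorname{span}\{y\})>0$.

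The only genuinely analytic input is the comparison $\tilde P(\xi,t)\asymp|P(\xi)|+t^m$ for $t\ge1$ with constants uniform in $\xi$ and $t$ --- in particular the anisotropic lower bound $|P(\xi)|\ge c\sum_j|\xi_j|^{m_j}$ for large $\xi$, which is the one place semi-ellipticity enters decisively; the rest is bookkeeping with Theorem~\ref{elliptic}, Corollary~\ref{flat singular}, Lemma~\ref{equivalence} and Proposition~\ref{boundary distance}.
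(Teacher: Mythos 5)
Your proof is correct, and structurally it follows the same outline as the paper's argument: deduce ii) $\Leftrightarrow$ iii) $\Leftrightarrow$ iv) from Theorem~\ref{elliptic} after observing via Proposition~\ref{invariant subspace} that $P_m$ acts along $\{P_m=0\}^\perp$ and is elliptic there, and deduce i) $\Leftrightarrow$ iv) from the description of $\{\sigma_{P^+}=0\}$ together with Lemma~\ref{equivalence} and Proposition~\ref{boundary distance}. Packaging the latter through Corollary~\ref{flat singular} rather than invoking \cite[Corollary 11.3.2]{Hoermander} and Theorem~\ref{minimal sufficient} directly, as the paper does, is purely cosmetic since Corollary~\ref{flat singular} is itself proved from exactly those two ingredients.

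The one genuine difference is that the paper simply cites \cite[Proposition 2, Lemma 3]{Frerick} for the two facts that $Z:=\{P_m=0\}$ is a linear subspace and that $\sigma_{P^+}(V)=0$ iff $V\subseteq Z\times\R$, whereas you reprove these from scratch. Your computation is sound: identifying $J=\{j;\,m_j=m\}$ and $W=\{\xi;\,\xi_j=0\text{ for }j\notin J\}$, showing $P_m=P_{\mathbf m}|_W$ and hence $\{P_m=0\}=W^\perp$, and then establishing $\sigma_P^0(V)=0\Leftrightarrow V\subseteq W^\perp$ via the two-sided estimate $\tilde P(\xi,t)\asymp |P(\xi)|+t^m$ uniformly in $\xi\in\R^n$, $t\ge1$. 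This makes the theorem self-contained at the cost of roughly a page of anisotropic estimates. Two very small points: when you bound $\tilde P_{\operatorname{span}\{y\}}(\xi,t)\ge\sup_{|s|\le t}|P(\xi+sy)|$ you implicitly need $|y|\le1$, so one should normalize $y$ (harmless since $\sigma_P^0$ depends only on $\operatorname{span}\{y\}$); and the claimed lower bound $|P(\xi)|\ge\tfrac12\sum_j|\xi_j|^{m_j}$ for large $\xi$ should read $|P(\xi)|\ge\tfrac{c}{2}\sum_j|\xi_j|^{m_j}$ with $c=\min\{|P_{\mathbf m}(\omega)|;\,\sum_j|\omega_j|^{m_j}=1\}$, but this changes only a constant and does not affect the conclusion.
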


\begin{proof}
It is shown in \cite[Proposition 2, Lemma 3]{Frerick} that $Z:=\{x\in\R^n;\,P_m(x)=0\}$ is a subspace of $\R^n$ and that $\sigma_{P^+}(V)=0$ if and only if $V$ is a subspace of $Z\times\R$. In particular, 
\begin{equation}\label{inclusion}
\{x\in\R^{n+1};\sigma_{P^+}(x)=0\}= Z\times\R=(Z^\perp\times\{0\})^\perp.
\end{equation}

Since $\sigma_{P^+}(Z\times\R)=0$ by Lemma \ref{equivalence} it follows from \cite[Corollary 11.3.2]{Hoermander} that $d_{X\times\R}$ satisfies the minimum principle in $x+(Z\times\R)^\perp$ for every $x\in\R^{n+1}$ if $X\times\R$ is $P^+$-convex for singular supports. Proposition \ref{boundary distance} therefore implies that $d_X$ satisfies the minimum principle in every affine subspace parallel to $Z^\perp$ if $X\times\R$ is $P^+$-convex for singular supports. Thus, i) implies iv).

On the other hand, if $d_X$ satisfies the minimum principle in every affine subspace parallel to $Z^\perp$ it follows from Proposition \ref{boundary distance} that $d_{X\times\R}$ satisfies the minimum principle in every affine subspace parallel to $(Z\times\R)^\perp=Z^\perp\times\{0\}$. From (\ref{inclusion}) together with Theorem \ref{minimal sufficient} we therefore conclude that $X\times\R$ is $P^+$-convex for singular supports. Hence, iv) implies i).

In order to prove the remaining equivalences, we observe that by Proposition \ref{invariant subspace} $P_m$ is a polynomial acting along the subspace $Z^\perp$. Since $Z\cap Z^\perp=\{0\}$ $P_m$ is elliptic as a polynomial on $Z^\perp$ so that Theorem \ref{elliptic} yields the equvalences of ii), iii), and iv).
\end{proof}

We are now able to prove the main result of this section. Part iii) is in particular applicable to non-degenerate parabolic operators like the heat operator.

\begin{theorem}\label{special augmented}
Let $P$ be a polynomial with principal part $P_m$ and let $X\subseteq\R^n$ be open such that $P(D):\mathscr{D}'(X)\rightarrow\mathscr{D}'(X)$ is surjective. Then the augmented operator $P^+(D):\mathscr{D}'(X\times\R)\rightarrow\mathscr{D}'(X\times\R)$ is surjective if
\begin{itemize}
	\item[i)] $P$ acts along a subspace $W$ and is elliptic as a polynomial on $W$, or
	\item[ii)] $P(x)=\alpha\prod_{j=1}^l(\langle N_j,x\rangle+c_j)$ where $N_j\in\C^n\backslash\{0\}$ and $\alpha,c_j\in\C, j=1\ldots,l$, or
	\item[iii)] $P$ is a semi-elliptic polynomial for which $\{x\in\R^n;\,P_m(x)=0\}$ is a one-dimensional subspace.
\end{itemize}
\end{theorem}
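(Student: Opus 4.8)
The plan is to reduce both cases to Hörmander's characterization \cite{Hoermander4} that a constant-coefficient operator $Q(D)$ is surjective on $\mathscr{D}'(Y)$ precisely when $Y$ is $Q$-convex for supports and $Q$-convex for singular supports. Hence it suffices, in each case, to show that $X\times\R$ is both $P^+$-convex for supports and $P^+$-convex for singular supports; it will turn out that in each case this follows already from $X$ being $P$-convex for supports, which is implied by the surjectivity of $P(D)$. One repeatedly uses that $P^+=P\circ\pi$ has degree $m$ with principal part $(P^+)_m=P_m\circ\pi$, so that $\{x\in\R^{n+1};(P^+)_m(x)=0\}=\{x\in\R^n;P_m(x)=0\}\times\R$, together with Proposition \ref{boundary distance}.

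For case i), if $W=\{0\}$ then $P$ is a nonzero constant and both operators are trivially surjective, so assume $W\neq\{0\}$. I would first check that if $P$ acts along $W$ and is elliptic there, then $P^+$ acts along $\widetilde W:=W\times\{0\}$ and is elliptic as a polynomial on $\widetilde W$: since $(\widetilde W)^\perp=W^\perp\times\R$ and $\eta\in W^\perp\times\R$ forces $\eta'\in W^\perp$, we have $P^+(x+\eta)=P(x'+\eta')=P(x')=P^+(x)$, while $P^+$ restricted to $\widetilde W$ coincides with $P$ on $W$. Since $P(D)$ is surjective, $X$ is $P$-convex for supports, so Theorem \ref{elliptic} gives that $d_X$ satisfies the minimum principle in every affine subspace parallel to $W$. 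Taking $F=y+\widetilde W$ (whence $F'=y'+W$) in the first half of Proposition \ref{boundary distance}, for arbitrary $y\in\R^{n+1}$, yields that $d_{X\times\R}$ satisfies the minimum principle in every affine subspace parallel to $\widetilde W$. Applying Theorem \ref{elliptic} once more, now to $P^+$ on $X\times\R$, shows that $X\times\R$ is $P^+$-convex for supports and for singular supports, so $P^+(D)$ is surjective by \cite{Hoermander4}.

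For case ii) let $Z:=\{x\in\R^n;P_m(x)=0\}$, by hypothesis a one-dimensional subspace; note $n\geq2$, since a nonzero principal part cannot vanish on all of $\R^1$. By Corollary \ref{characterization for supports}, $P$-convexity of $X$ for supports is equivalent to $d_X$ satisfying the minimum principle in every characteristic hyperplane, and since $Z$ is one-dimensional these are exactly the affine subspaces parallel to $Z^\perp$. Hence surjectivity of $P(D)$ gives that $d_X$ satisfies the minimum principle in $x+Z^\perp$ for every $x\in\R^n$, which is condition iv) of Theorem \ref{semi-elliptic}; therefore $X\times\R$ is $P^+$-convex for singular supports. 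For $P^+$-convexity for supports, take $F=y+(Z^\perp\times\{0\})$ in Proposition \ref{boundary distance} (whence $F'=y'+Z^\perp$); this gives that $d_{X\times\R}$ satisfies the minimum principle in every affine subspace parallel to $W:=Z^\perp\times\{0\}$, and $W\neq\{0\}$ because $n\geq2$. Since $\{x\in\R^{n+1};(P^+)_m(x)=0\}=Z\times\R=(Z^\perp\times\{0\})^\perp=W^\perp$, Theorem \ref{minimal sufficient} i) applied to $P^+$, $X\times\R$ and $W$ shows that $X\times\R$ is $P^+$-convex for supports. Together with \cite{Hoermander4} this completes case ii).

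I do not expect a substantial obstacle: the content lies in the earlier results, above all Theorems \ref{minimal sufficient}, \ref{elliptic}, \ref{semi-elliptic} and Proposition \ref{boundary distance}. The only points needing care are verifying that the structural hypotheses on $P$ lift correctly to $P^+$ (that $(P^+)_m=P_m\circ\pi$, and in i) that ``acting along $W$ and being elliptic there'' passes to $W\times\{0\}$), and, in case ii), noting that the one-dimensionality hypothesis forces $n\geq2$ so that the lifted subspace $Z^\perp\times\{0\}$ is nonzero, as required by Theorems \ref{minimal sufficient} and \ref{elliptic}.
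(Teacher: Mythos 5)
Your proof is correct, and its overall strategy---establish $P^+$-convexity for both supports and singular supports and invoke H\"ormander's characterization---is the same as the paper's. The one genuine difference is how you obtain $P^+$-convexity for supports of $X\times\R$. The paper handles this step once and for all by citing \cite[Proposition 1]{Frerick}, which says that $X\times\R$ is $P^+$-convex for supports whenever $X$ is $P$-convex for supports (for arbitrary $P$), and then devotes the rest of the proof to the singular-supports half: in case i) via Theorem \ref{elliptic}, in case ii) via \cite[Theorem 10.8.1]{Hoermander} and Theorem \ref{semi-elliptic}. You instead re-derive $P^+$-convexity for supports in each case using the minimum-principle machinery developed in the paper itself: in case i) by pushing the minimum principle from $X$ to $X\times\R$ with Proposition \ref{boundary distance} and then reading off both convexities from Theorem \ref{elliptic} applied to $P^+$, and in case ii) by combining Proposition \ref{boundary distance} with Theorem \ref{minimal sufficient} i). Your route is more self-contained (it avoids an external reference and treats both convexity conditions uniformly through the minimum principle), at the cost of being a bit longer and requiring the explicit structural verifications for $P^+$ (that $(P^+)_m=P_m\circ\pi$, that $P^+$ acts along $W\times\{0\}$ and is elliptic there, the edge cases $W=\{0\}$ and $n\geq 2$), which you carry out correctly; the paper's citation of \cite{Frerick} is more economical and does not need those case distinctions for the supports half.
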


\begin{proof}
Since $X\times\R$ is $P^+$-convex for supports whenever $X$ is $P$-convex for supports by \cite[Proposition 1]{Frerick} we only have to show that $X\times\R$ is $P^+$-convex for singular supports.

If $P$ acts along a subspace and is elliptic there, the same holds for $P^+$. As $X\times\R$ is $P^+$-convex for supports it follows from Theorem \ref{elliptic} that $X\times\R$ is also $P^+$-convex for singular supports, proving i).

For $P(x)=\alpha\prod_{j=1}^l(\langle N_j,x\rangle+c_j)$, each of the factors of $P$ acts along a subspace and is elliptic there. Thus applying i) to each of the factors yields ii). 

In case of iii) it follows from \cite[Theorem 10.8.1]{Hoermander} that $d_X$ satisfies the minimum principle in every characteristic hyperplane. Thus, it follows from Theorem \ref{semi-elliptic} that $X\times\R$ is $P^+$-convex for singular supports.
\end{proof}

\begin{remark}
\begin{rm}
a) It was kindly pointed out to us by D.\ Vogt that for $P(D)=\langle D,N\rangle +c$ in case of $N\in\R^n$ the surjectivity of $P(D)$ on $\mathscr{D}'(X)$ already implies the existence of a continuous linear right inverse of $P(D)$ on $\mathscr{D}'(X)$ (see \cite{MeiseTaylorVogt}) and thus a continuous linear right inverse of $P^+(D)$ on $\mathscr{D}'(X\times\R)$. In particular, $P^+(D)$ is then surjective on $\mathscr{D}'(X\times\R)$. However, due to the well-known result of Grothendieck that elliptic $P(D)$ do not have continuous linear right inverses for $n\geq 2$ (see e.g.\ \cite[Theorem C.1]{Treves}) on $C^\infty(X)$ combined with \cite[Theorem 2.7]{MeiseTaylorVogt} the Cauchy-Riemann operator $\partial_{\overline{z}}$ shows that on $\mathscr{D}'(X)$ a surjective first order operator $P(D)=\langle D,N\rangle +c$ with $N\in\C^n\backslash\R^n$ in general does not have a continuous linear right inverse.

\noindent b) As shown in the proof of Theorem \ref{semi-elliptic}, the principal part $P_m$ of a semi-elliptic polynomial $P$ acts along the subspace $\{x\in\R^n;\,P_m(x)=0\}^\perp$ and is elliptic as a polynomial on this subspace. Therefore, applying Theorem \ref{special augmented}, Theorem \ref{elliptic}, and Proposition \ref{boundary distance}, it follows that for a semi-elliptic polynomial the statement
\begin{itemize}
	\item[v)] $X\times\R$ is $P_m^+$-convex for singular supports.
\end{itemize}
can be added to the equivalences in Theorem \ref{semi-elliptic}.
\end{rm}
\end{remark}

\section{Surjectivity of differential operators on vector-valued functions and distributions and linear topological invariants}\label{topological invariants}

In this section we consider the problem whether surjectivity of a differential operator $P(D)$ on $C^\infty(X)$, resp.\ on $\mathscr{D}'(X)$, passes on to surjectivity of $P(D)$ on spaces of vector-valued functions, resp.\ vector-valued distributions. As usual, for a locally convex space $E$ we denote by $C^\infty(X,E)$ the space of smooth functions with values in $E$ and by $\mathscr{D}'(X,E)$ the space of $E$-valued distributions, i.e.\ the space of continuous linear operators from $\mathscr{D}(X)$ to $E$ equipped with its usual locally convex topology. As explained in the introduction, surjectivity of $P(D)$ on spaces of vector-valued smooth functions and distributions is connected with the problem of parameter dependence for solutions of the equation $P(D)u_\lambda=f_\lambda$ on the corresponding scalar-valued spaces. Obviously, for two topologically isomorphic locally convex spaces $E_1$ and $E_2$, $P(D)$ is surjective on $C^\infty(X,E_1)$ (resp.\ $\mathscr{D}'(X,E_1)$) if and only if $P(D)$ is surjective on $C^\infty(X,E_2)$ (resp.\ $\mathscr{D}'(X,E_2)$). As described in the introduction, this problem is addressed by proving that the kernels $C_P^\infty(X)$, resp.\ $\mathscr{D}_P'(X)$ have properties $(\Omega)$, resp.\ $(P\Omega)$ when $P(D)$ is surjective on $C^\infty(X)$, resp.\ $\mathscr{D}'(X)$.

\begin{theorem}\label{properties omega}
	Let $X\subseteq\R^n$ be open and let $P$ be a polynomial with principle part $P_m$.
	\begin{itemize}
		\item[a)] Let $P$ be semi-elliptic such that $\{x\in\R^n;\,P_m(x)=0\}$ is a one-dimensional subspace of $\R^n$. If $X$ is $P$-convex for supports then $C_P^\infty(X)$ has property $(\Omega)$.
		\item[b)] Let $P$ act along a subspace of $\R^n$ such that it is elliptic there. If $P(D)$ is surjective on $\mathscr{D}'(X)$ then $\mathscr{D}'_P(X)$ has property $(P\Omega)$.
		\item[c)] Let $P(x)=\alpha\prod_{j=1}^l(\langle N_j,x\rangle -c_j)$ where $\alpha,c_j\in\C$ and $N_j\in \C^n\backslash\{0\}, j=1,\ldots,l$ and let $P(D)$ be surjective on $\mathscr{D}'(X)$. Then $\mathscr{D}'_P(X)$ has $(P\Omega)$.
	\end{itemize}
\end{theorem}

\begin{proof}
	First, let $P$ be a semi-elliptic polynomial such that $\{x\in\R^n;\,P_m(x)=0\}$ is a one-dimensional subspace of $\R^n$. Since semi-elliptic operators are hypoelliptic it follows that $P(D)$ is surjective on $\mathscr{D}'(X)$ if it is surjective on $C^\infty(X)$. Thus, by Theorem \ref{special augmented} it follows that $P^+(D)$ is surjective on $\mathscr{D}'(X\times\R)$ so that $\mathscr{D}'_P(X)$ has $(P\Omega)$ by \cite[Proposition 8.3]{Bonet}. Since $P(D)$ is hypoelliptic, $\mathscr{D}'_P(X)$ and $C^\infty_P(X)$ coincide as locally convex spaces by \cite[Theorem 52.1]{Treves2}, thus the Fr\'echet-Schwartz space $C^\infty_P(X)$ has $(\Omega)$ by \cite[Proposition 5.3]{Bonet}. This proves a).
	
	For a polynomial $P$ which acts along a subspace and is elliptic there, using muatis mutandis the same arguments used to prove a) yield b).
	
	Finally, let $P(x)=\alpha\prod_{j=1}^l(\langle N_j,x \rangle -c_j)$ and let $P(D)$ be surjective on $\mathscr{D}'(X)$. Each factor of $P$ acts along one of the subspaces $\mbox{span}\{\mbox{Re}\,N_j,\mbox{Im}\,N_j\}$ and is elliptic there so it follows from part b) and \cite[Proposition 8.3]{Bonet} that each of the factors of $P(D)$ and therefore $P(D)$ itself is surjective on $\mathscr{D}'(X\times\R)$. Another reference to \cite[Proposition 8.3]{Bonet} yields that $\mathscr{D}'_P(X)$ has $(P\Omega)$.
\end{proof}

\begin{corollary}
	Let $X\subseteq\R^n$ be open and let $P$ be a polynomial with principle part $P_m$.
	\begin{itemize}
		\item[a)] Let $F$ be a Fr\'echet space with property $(DN)$ and let $P$ be semi-elliptic such that $\{x\in\R^n;\,P_m(x)=0\}$ is a one-dimensional subspace of $\R^n$ and such that $P(D)$ is surjective on $C^\infty(X)$. Then $P(D)$ is surjective on $C^\infty(X,F')$.
		\item[b)]  Let $E$ be a locally convex space such that $E$ is either the strong dual of a nuclear Fr\'echet space with $(DN)$, the strong dual of a Fr\'echet-Schwartz space with $(DN)$ which has an absolute Schauder basis, or $\mathscr{D}'(Y)$, where $Y\subseteq\R^m$ is open.    
		\begin{itemize}
			\item[i)] Let $P$ be semi-elliptic such that $\{x\in\R^n;\,P_m(x)=0\}$ is a one-dimensional subspace of $\R^n$ and such that $P(D)$ is surjective on $C^\infty(X)$. Then $P(D)$ is surjective on $\mathscr{D}'(X,E)$.
			\item[ii)] Assume $P$ acts along a subspace of $\R^n$ and is elliptic there such that $P(D)$ is surjective on $\mathscr{D}'(X)$. Then $P(D)$ is surjective on $\mathscr{D}'(X,E)$.
			\item[iii)] Let $P(x)=\alpha\prod_{j=1}^l(\langle N_j,x\rangle -c_j)$ where $\alpha,c_j\in\C$ and $N_j\in \C^n\backslash\{0\}, j=1,\ldots,l$ such that $P(D)$ is surjective on $\mathscr{D}'(X)$. Then $P(D)$ is surjective on $\mathscr{D}'(X,E)$.
		\end{itemize}
		\end{itemize}
\end{corollary}

\begin{proof}
	If the hypothesis of a) are satisfied it follows from part a) of Theorem \ref{properties omega} that $C_P^\infty(X)$ has property $(\Omega)$. If $F$ is a Fr\'echet space with $(DN)$ it follows from \cite[Theorem 2.4 b)]{Vogt} that $P(D)$ is surjective on $C^\infty(X,F')$ which proves a). In case $F$ is a Fr\'echet-Schwartz space with absolute Schauder basis, $F$ is topologically isomorphic to a K\"othe sequence space $\lambda^1(A)$ (see \cite[Proposition 27.26]{MeiseVogt}). If $F$ additionally has $(DN)$ it follows from \cite[Theorem 5.6, Corollary 4.2, and Proposition 3.3]{Bonet} that $P(D)$ is surjective on $\mathscr{D}'(X,F')$. In case $F$ is a nuclear Fr\'echet space with $(DN)$ it follows again from \cite[Theorem 5.6, Corollary 4.2, and Proposition 3.3]{Bonet} that $P(D)$ is surjective on $\mathscr{D}'(X,F')$. Since by the Schwartz-Kernel Theorem $\mathscr{D}'(X\times\R)$ and $\mathscr{D}'(X,\mathscr{D}'(\R))$ are canonically isomorphic and since under this isomorphism $P^+(D)$ and $P(D)$ on $\mathscr{D}'(X,\mathscr{D}'(\R))$ are conjugate it follows that the latter is surjective. Moreover, since $\mathscr{D}'(\R)$ and $\mathscr{D}'(Y)$ are topologically isomorphic (see \cite{Vogt2}) we finally obtain surjectivity of $P(D)$ on $\mathscr{D}'(X,\mathscr{D}'(Y))$ which proves part i) of b).
	
	For a polynomial $P$ which acts along a subspace and is elliptic there, the same arguments used to prove b) i) yield part ii) of b).
	
	Finally, let $P(x)=\alpha\prod_{j=1}^l(\langle N_j,x \rangle -c_j)$ such that $P(D)$ is surjective on $\mathscr{D}'(X)$. By part b) of Theorem \ref{properties omega} $\mathscr{D}_P'(X)$ has $(P\Omega)$. Thus, applying \cite[Proposition 8.3]{Bonet} and the Schwartz-Kernel Theorem once more gives b) iii).
\end{proof}

\begin{small}
{\sc Technische Universit\"at Chemnitz,
Fakult\"at f\"ur Mathematik, 09107 Chemnitz, Germany}

{\it E-mail address: thomas.kalmes@mathematik.tu-chemnitz.de}
\end{small}


\begin{thebibliography}{99}
	\bibitem{Bonet} J.\ Bonet, P.\ Doma\'nski, {\it Parameter dependence of solutions of differential equations on spaces of distributions and the splitting of short exact sequences}, J.\ Funct.\ Anal.\ 230:329--381, 2006.
	\bibitem{Bonet2} J.\ Bonet, P.\ Doma\'nski, {\it The splitting of exact sequences of PLS-spaces and smooth dependence of solutionsof linear partial differential equations.} Adv.\ Math.\ 217(2):561--585, 2008.
	\bibitem{Domanski} P.\ Doma\'nski, {\it Classical PLS-spaces: spaces of distributions, real analytic functions and their relatives.} In {\it Orlicz centenary volume,} 51--70 Banach Center Publ.\ 64.\ Polish Acad.\ Sci., Warsaw, 2004.
	\bibitem{Domanski2} P.\ Doma\'nski, {\it Real analytic parameter dependence of solutions of differential equations.} Rev.\ Mat.\ Iberoam. 26(1):175--238, 2010.
    \bibitem{Frerick} L.\ Frerick, T.\ Kalmes, {\it Some results on surjectivity of augmented semi-elliptic differential operators}, Math.\ Ann.\ 347:81--94, 2010.
    \bibitem{Grothendieck2} A.\ Grothendieck, {\it Sur les espaces de solution d'une classe g\'en\'erale d'equations aux d\'eriv\'ees partielles}, J.\ d'analyse math.\ 2:243--280, 1952-53.
    \bibitem{Grothendieck} A.\ Grothendieck, {\it Produits tensoriels topologiques et espaces nucl\'eaires}, Mem.\ Amer.\ Math.\ Soc.\ 16, 1955.
    \bibitem{Hoermander4} L.\ H\"ormander, {\it On the range of convolution operators}, Ann.\ of Math.\ 76 (1962), 148-170
    \bibitem{Hoermander2} L.\ H\"ormander, {\it On the existence and the regularity of solutions of linear pseudo-differential equations}, Ens.\ Math.\ 17:99-163, 1971.
    \bibitem{Hoermander} L.\ H\"ormander, {\it The Analysis of Linear Partial Differential Operators I and II}, Springer, 1983.
    \bibitem{Kalmes} T.\ Kalmes, {\it Every $P$-convex subset of $\R^2$ is already strongly $P$-convex.} Math.\ Z.\ 269(3-4):721--731, 2011.
    \bibitem{Kalmes1} T.\ Kalmes, {\it Some results on surjectivity of augmented differential operators}, J.\ Math.\ Anal.\ Appl.\ 386:125--134, 2012.
    \bibitem{Kalmes2} T.\ Kalmes, {\it The augmented operator of a surjective partial differential operator with constant coefficients need not be surjective}, Bull.\ London Math.\ Soc.\ 44:610--614, 2012.
    \bibitem{Malgrange} B.\ Malgrange, {\it Existence et approximation des solutions des \'equations aux d\'eriv\'ees partielles et des \'equations de convolution}, Ann.\ Inst.\ Fourier Grenoble, 6 (1955-1956), 271-355
    \bibitem{MeiseTaylorVogt} R.\ Meise, B.A.\ Taylor, and D.\ Vogt, {\it Characterization of the linear partial differential operators with constant coefficients that admit a continuous linear right inverse}, Ann.\ Inst.\ Fourier (Grenoble), 40(3):619--655, 1990.
    \bibitem{MeiseTaylorVogt2} R.\ Meise, B.A.\ Taylor, and D.\ Vogt, {\it Continuous linear right inverses for partial differential operators of order 2 and fundamental solutions in half spaces.} Manuscripta Math.\ 90(4):449--464, 1996.
	\bibitem{MeiseVogt} R.\ Meise, D.\ Vogt. {\it Introduction to Functional Analysis}, Clarendon Press, Oxford, 2004.
	\bibitem{Nakane} S.\ Nakane, {\it $P$-Convexity with respect to differential operators which act on linear subspaces}, Proc.\ Japan Acad.\ 55 Ser.\ A: 343--347, 1979.
	\bibitem{Persson} J.\ Persson, {\it The wave operator and $P$-convexity}, Boll.\ Un.\ Mat.\ Ital.\ (5) 18-B: 591--604, 1981.
	\bibitem{Persson2} J.\ Persson, {\it The geometry of $P$-convex sets}, Boll.\ Un.\ Mat.\ Ital.\ (7) 7-B: 549--573, 1993.
    \bibitem{Tintarev1} K.\ Tintarev, {\it On the geometry of $P$-convex sets for operators of real principle type}, Israel J.\ Math.\ 64:195--206, 1988.
    \bibitem{Tintarev2} K.\ Tintarev, {\it Characterization of $P$-convexity for supports in terms of tangent curves}, J.\ Math.\ Anal.\ Appl.\ 164:590--596, 1992.
    \bibitem{Treves3} F.\ Tr\`eves, {\it Linear Partial Differential Operators with Constant Coefficients}, Mathematics and its Applications. 6. New York etc.: Gordon and Breach Science Publishers. X, 1966.
    \bibitem{Treves} F.\ Tr\`eves, {\it Locally Convex Spaces and Linear Partial Differential Equations}, Springer, 1967.
	\bibitem{Treves2} F.\ Tr\`eves, {\it Topological Vector Spaces, Distributions, and Kernels}, Academic Press, 1967.
	\bibitem{Vogt3} D.\ Vogt, {\it Subspaces and quotients of (s).} In {\it Functional Analysis: Survey and Recent Results} (K.D.\ Bierstedt and B.\ Fuchssteiner, ed.), North-Holland Math.\ Stud.\ 27, 167--187, 1977.
	\bibitem{Vogt4} D.\ Vogt, M.J.\ Wagner, {\it Charakterisierung der Quotientenr\"aume von $s$ und eine Vermutung von Martineau}, Studia Math. 68:225--240, 1980.
    \bibitem{Vogt} D.\ Vogt, {\it On the Solvability of $P(D)f=g$ for vector valued functions}, RIMS Kokyoroku 508:168--181, 1983.
	\bibitem{Vogt2} D.\ Vogt, {\it Sequence space representations of spaces of test functions and distributions.} In {\it Functional analysis, holomorphy, and approximation theory (Rio de Janeiro, 1979)}, volume 83 of Lecture Notes in Pure and Appl.\ Math., pages 405--443. Dekker, New York, 1983.
	\bibitem{Vogt5} D.\ Vogt, {\it Invariants and spaces of zero solutions of linear partial differential operators.} Arch.\ Math.\ 87:163--171, 2006.
    \bibitem{Wiechert} G.\ Wiechert, {\it Dualit\"ats- und Strukturtheorie der Kerne von linearen Differentialoperatoren}, Dissertation Wuppertal, 1982.
    \bibitem{Zachmanoglou}  E.C.\ Zachmanoglou, {\it An application of Holmgren's theorem and convexity with respect to differential operators with flat characteristic cones}, Trans.\ Amer.\ Math.\ Soc.\ 140:109--115, 1969.
\end{thebibliography}
\end{document}